\documentclass[11pt]{article}
\usepackage{graphicx}
\usepackage{csquotes}
\usepackage{epsfig}
\usepackage{lscape}
\usepackage{amsfonts}
\usepackage[misc,geometry]{ifsym}
\usepackage{color}
\usepackage{amssymb,amsmath}
\usepackage{listings}
\usepackage{rotating}
\usepackage{graphicx}
\usepackage{multicol}
\usepackage[english]{babel}
\usepackage{lineno}
\usepackage{mathrsfs}
\usepackage[pdfpagelabels=true,plainpages=false,colorlinks=true,linkcolor=blue,citecolor=blue,urlcolor=blue]{hyperref}
\date{}
\parindent 0pt
\makeatletter \oddsidemargin  -0.5cm \evensidemargin -0.5cm \textwidth
16.5cm \topmargin 0.005cm \textheight 22cm
\usepackage{amssymb}
\usepackage{amsmath}
\usepackage{amsthm}
\usepackage{enumerate}
\usepackage{float}
\restylefloat{table}
\usepackage{caption}
\usepackage{mathtools}
\usepackage{lipsum}
\usepackage{xcolor}
\usepackage{soul}
\sethlcolor{black}
\makeatletter
\let\chapter\section
\usepackage[ruled,vlined]{algorithm2e}
\usepackage{afterpage}
\usepackage{url}
\usepackage{geometry}
\usepackage{pdflscape}
\usepackage{cases}
\usepackage{rotating}
\usepackage{booktabs}

\usepackage{tikz}

\usepackage[numbers,sort&compress]{natbib}
\bibpunct[, ]{[}{]}{,}{n}{,}{,}
\makeatletter
\def\NAT@def@citea{\def\@citea{\NAT@separator}}
\makeatother
\theoremstyle{plain}
\newtheorem{theorem}{Theorem}[section]

\newtheorem{corollary}[theorem]{Corollary}

\usepackage{placeins}
\usepackage[ruled,vlined]{algorithm2e}
\newtheorem{defn}{Definition}
\newtheorem{prop}{Proposition}

\newtheorem{note}{Note}
\newtheorem{ex}{Example}

\usepackage{mathrsfs}
\usepackage{epsfig}

\begin{document}

\markboth{P.Roy, G.Panda}
{Expansion of interval valued function}


\title{Expansion of Generalized Hukuhara Differentiable Interval Valued Function}

\author{Priyanka Roy$ ^\ddagger $  and Geetanjali Panda
\\
 Department of Mathematics, Indian Institute of Technology Kharagpur, \\
Kharagpur, West Bengal-721302, India.\\
$ ^\ddagger $ proy180192@maths.iitkgp.ac.in}

\maketitle


\begin{abstract}
In this article the concept of $\mu-$ monotonic property of interval valued function in higher dimension is introduced. Expansion of interval valued function in higher dimension is developed using this property. Generalized Hukuhara differentiability is used to derive the theoretical results. Several examples are provided to justify the theoretical developments.\\
\textbf{keywords} Interval function; $\mu$ monotonic property; generalized Hukuhara derivative.
\end{abstract}

\section{Introduction}
Importance of the study of uncertainty theory from theoretical point of view has been increased in recent years due to its application in several issues of image processing, control theory, decision making, dynamic economy, optimization theory etc. Due to the increasing in complexity of environment, change of climate and inherent nature of human thought, crisp values are insufficient to make real life decision making problems. In these uncertain environments, parameters of the mathematical models are accepted as uncertain, which are usually considered in linguistic sense or in probabilistic sense. However, it is not always convenient to build appropriate membership function and probability distribution function to handle the linguistic parameters and probabilistic parameters respectively. To avoid this difficulty, in recent times, the uncertain parameters are considered as intervals, where the upper and lower bounds of the parameters are estimated from the historical data. In that case, the functions involved in the model have bounded parameters and known as interval valued functions. Interval analysis plays an important role to handle these functions. Calculus of set valued function is based on generalized Hukuhara difference(gH-difference) which is explored in Refs.\cite{aubindifferential, chalco2011generalized, de2007existence, galanis2005set, ibrahim1996differentiability, li2009calculus, tu2009stability}. Since the interval valued function is a particular case of set valued function, so  gH difference ($ \ominus_{gH}$) is defined for two intervals and used in  uncertainty theory including interval analysis, fuzzy set theory, interval optimization, interval differential equations etc.(see Refs. \cite{bede2013generalized, chalco2013calculus, costa2015generalized, Lupulescu201350, Malinowski2011JAML, stefanini2008generalization, stefanini2009generalized}).
So far, calculus of interval valued function is widely studied and applied in different types of mathematical models, but expansion of interval valued function remains an untouched area of research. The present contribution has addressed this gap to some extent. Rall \cite{rall1983mean} developed interval version of mean value theorem and Taylor's theorem using interval inclusion property and Gateaux type derivative. In this article, generalized Hukuhara difference is used to study the expansion of interval valued functions from $ \mathbb{R}^n $ to the set of intervals with the help of $ \mu $ monotonic property.
 \par
        An interval valued function  $\hat{f}$ may be  treated either as the image extension of a real valued function $f: \mathbb{R}^n\rightarrow \mathbb{R}$, represented by  $\hat{f}(\hat{A})=\{f(x): x\in \hat{A} \mbox{, $\hat{A}$ is a closed interval vector}\}$ or as a function from $\mathbb{R}^n$ to the set of intervals, whose parameters are intervals and arguments are real. For example, image extension of a real valued function $f(x_1,x_2)=2x_1+3x_2$ over an interval vector $(X_1,X_2)=([1,3], [0,2])$ is $\hat{f}(X_1,X_2)=2[1,3]+3[0,2]$, where as an example of the second category interval function may be $\hat{f}(x_1,x_2)=[1,4]x_1^2+[0,1]x_2$. Several numerical algorithms are designed using the concept of image extension of real valued functions to compute the rigorous bounds of  approximate errors while solving system of equations, determining the bounds for exact value of integrals, and other scientific computations. For the existing literature in this area the readers may see Refs.\cite{rall1983mean, caprani1980mean, hansen2003global, Baker1996kap, moore2009introduction, Neumaier1990book}. This article has focused on the second type interval valued functions ($\hat{f}$ from $R^n$ to the set of closed intervals), whose arguments are real variables and parameters are intervals.
\\\\
Contribution of the paper is explored in different sections. Some notations and preliminaries on interval analysis are discussed in Section 2. $\mu-$ monotonic property of interval valued function of single variable is developed in the existing theory of interval analysis \cite{markov1979calculus}. Using this concept, $\mu-$ monotonicity of interval valued function over $\mathbb{R}^n$ is introduced in Section 3 and  calculus of interval valued function over $\mathbb{R}^n$ is revisited. In Section 4, expansion  of interval valued functions in higher dimension is developed using the concept of previous section. Numerical examples are provided for the justification of the theoretical developments.
\section{Some notations and preliminaries}
Let $ I(\mathbb{R}) $ be the set of all closed intervals on the real line $ \mathbb{R} $. $ \hat{a} \in I(\mathbb{R}) $ is the closed interval of the form $ [\underline{a},\overline{a}] $ where $ \underline{a} \leq \overline{a} $. Spread of the interval $ \hat{a} $ is denoted by $ \mu(\hat{a})$, where $ \mu(\hat{a})=\overline{a} - \underline{a} $. For two points $ a_{1}$ and $ a_{2} $,(not necessarily $ a_{1}\leq a_{2}  $), $ \hat{a} $ can be written as $  \hat{a}=[a_{1} \vee a_{2}]  =\left[\min\left\lbrace a_{1},a_{2}\right\rbrace ,\max\left\lbrace  a_{1},a_{2}\right\rbrace  \right]  $. Any real number $x$ can be expressed as a degenerate interval denoted by $\hat{x}$,  $\hat{x}=[x,x] $~or $x.\hat{I}$, where $\hat{I}=[1,1]$. $\hat{0}=[0,0]=0.\hat{I}$ denotes the null interval.\\
    Algebraic  operation  between two intervals  $ \hat{a} $, $ \hat{b} $  is defined as  $  \hat{a} \circledast \hat{b}=\left\lbrace a* b | a\in \hat{a},b\in \hat{b}  \right\rbrace$, where $ * \in \left\lbrace +,-,\cdot,/ \right\rbrace  $. Additive inverse in $ \left\langle I(\mathbb{R}),\oplus,\odot\right\rangle  $ may not exist, that is, $ \hat{a}\ominus \hat{a} $ is not necessarily $\hat{0}$ according to this approach.
To overcome this difficulty, the $ gH $ difference between two intervals is defined by \textbf{L. Stefanini} \cite{stefanini2009generalized}. For $ \hat{a}, \hat{b} \in I(\mathbb{R}) $,
$$    \hat{a} \ominus_{gH} \hat{b} = \left[ \min \left\lbrace \underline{a}- \underline{b}, \overline{a}-\overline{b}          \right\rbrace , \max \left\lbrace \underline{a}- \underline{b}, \overline{a}-\overline{b}          \right\rbrace  \right] .                  $$
This is the most generalized concept of interval difference used in interval calculus so far. As per this difference, for the intervals $\hat{a},\hat{b}$ and $\hat{c}$, $ \ominus_{gH} \hat{a}= \hat{0}\ominus_{gH} \hat{a}= (-1)\odot \hat{a} $ and
\begin{align}\hat{a}\ominus_{gH}\hat{b}=\hat{c} \Leftrightarrow \begin{cases} \hat{a}=\hat{b}\oplus \hat{c} \\
\mbox{~ or~~}\\
\hat{a}\oplus(-1)\hat{c}=\hat{b}\end{cases}\label{dif}\end{align}

Product of an interval with a real number, product of an interval vector with a real vector and product of a real matrix with an interval vector are defined as follows which are used throughout the article.
\begin{enumerate}
\item For $ a \in \mathbb{R} $, $ \hat{b}=[\underline{b}, \overline{b}] \in I(\mathbb{R}) $, $ a \hat{b}= \begin{cases}
[a\underline{b}, a\overline{b}] \text{  if }  a \geq 0\\
[a \overline{b}, a\underline{b}] \text{  if }  a \leq 0
\end{cases} $.
\item For $ p= (p_1,~ p_2,~\cdots, ~p_n)^T \in \mathbb{R}^n $ and  $ \hat{q}= (\hat{q}_1, ~ \hat{q}_2,~\cdots, ~\hat{q}_n)^T \in (I(\mathbb{R}))^n $, $ p^T\hat{q}= \sum_{i=1}^n p_i \hat{q}_i $.
\item For a real matrix $ A= (a_{ij})_{n \times m} \in \mathbb{R}^{n \times m} $ and an interval vector $ \hat{q}= (\hat{q}_1, ~ \hat{q}_2,~\cdots,~ \hat{q}_n)^T \in (I(\mathbb{R}))^n $,\\ $ A^{T} \hat{q} = \begin{pmatrix}
\sum_{i=1}^n a_{i1} \hat{q}_i, & \sum_{i=1}^n a_{i2} \hat{q}_i, & \cdots, & \sum_{i=1}^n a_{im} \hat{q}_i
\end{pmatrix}^T$
\end{enumerate}
An interval valued function  $\hat{f}: \mathbb{R}^n\rightarrow I(\mathbb{R}) $ can be expressed in the form   $\hat{f}(x)=[\underline{f}(x),\overline{f}(x)] $, where  $ \underline{f}(x)\leq \overline{f}(x) $, $ \forall~~x\in \mathbb{R}^n$,  $ \underline{f}, \overline{f}: \mathbb{R}^n\rightarrow \mathbb{R}$.
\\
Spread of $\hat{f}(x)$ is denoted by $\mu_{\hat{f}}(x)\triangleq\overline{f}(x)-\underline{f}(x)$.\\
\\\\
Some existing results on $ gH $-differentiability( which are based on  $ gH $-difference) are provided in this section for the interval valued function  
 $ \hat{f}$ on $\mathbb{R}$ and $\mathbb{R}^n$.
 Limit and continuity of an interval valued function are understood in the sense of metric structure of $ gH $ difference using Hausdorff distance between intervals as discussed in Ref. \cite{stefanini2009generalized}.
\begin{defn}[Stefanini and Bede \cite{stefanini2009generalized}]\label{Definition 2}
 Generalized Hukuhara derivative of $ \hat{f}: (t_1,t_2)\subseteq \mathbb{R} \rightarrow I(\mathbb{R}) $ at $ x_{0}\in (t_1,t_2) $ is defined as
$ \hat{f}^{\prime}(x_{0})= \lim_{h\rightarrow 0}\frac{\hat{f}(x_{0}+h) \ominus_{gH} \hat{f}(x_{0})}{h}$.
\end{defn}
\begin{defn}[R~Osuna-G{\'o}mez et. al\cite{osuna2015optimality}]
\label{Partial derivatives}
For an interval valued function $ \hat{f}:\mathbb{R}^{n}\rightarrow I(\mathbb{R}) $, if $ \lim_{h_i\rightarrow 0} \frac{1}{h_i}\left( \hat{f}(x_1,x_2, \cdots x_{i}+h_i, \cdots x_n)\ominus_{gH}\hat{f}(x)\right) $ exists, then we say that the partial derivative of $ \hat{f} $ with respect to $ x_{i} $ exists and the limiting value is denoted by $  \frac{\partial \hat{f}(x)}{\partial x_{i}} $.
\end{defn}
\begin{defn} [S. Markov \cite{markov1979calculus}]
\label{Definition 1}
$ \hat{f}:\Omega \subseteq \mathbb{R}\rightarrow I(\mathbb{R}) $ is said to be $ \mu $-increasing in $ \Omega $ if $ \mu_{\hat{f}}(x) $ is increasing in $ \Omega $,  that is, $ \mu_{\hat{f}}(x_{1})\leq \mu_{\hat{f}}(x_{2}) $  for $x_{1}, x_{2}\in \Omega $, satisfying $ x_{1}< x_{2} $,  otherwise $ \hat{f} $ is called $ \mu $-decreasing in $ \Omega $. $ \hat{f}$ is said to be monotonic in $\Omega$ if it is either $ \mu $-increasing or $ \mu $-decreasing in $ \Omega $.
\end{defn}
$ \ominus_{gH}  \hat{f} (x)= \hat{0}\ominus_{gH} \hat{f}(x)= (-1) \hat{f}(x) $. So $ \mu_{(\ominus_{gH}  \hat{f})}(x)= \mu_{\hat{f}}(x) $.  Hence,   $ \hat{f}:\mathbb{R}\rightarrow I(\mathbb{R}) $  is $ \mu $- increasing implies $ \ominus_{gH}\hat{f} $ is also $ \mu $-increasing.
 An interval valued function may be neither $ \mu $-increasing nor $ \mu $-decreasing in $\mathbb{R}$. (Example $ \hat{f}(x)=[1,3]x^{2}, x\in \mathbb{R}$).
\section{Calculus of $ \hat{f}: \mathbb{R}^n \rightarrow I(\mathbb{R}) $ using $ \mu $ monotonic property}
 $ \mu $-monotonic property of an interval valued function plays an important role while developing calculus of interval valued function in higher dimension. In the light of $ \mu $-monotonic property of interval valued function in single variable in \cite{markov1979calculus}, we first focus on  $ \mu $-monotonicity  in higher dimension.\\ \\
Consider $ \hat{f}:\mathbb{R}^n\rightarrow I(\mathbb{R}) $, $ \hat{f}(x)=[\underline{f}(x),\overline{f}(x)] $, $\underline{f},\overline{f}: \mathbb{R}^n\rightarrow \mathbb{R}$. Denote $ \Lambda_{n} \triangleq \left\lbrace 1,2,\cdots,n\right\rbrace $ and $(x:ih_{i})\triangleq (x_1,x_2,...,x_i+h_{i},...x_n)$. $\Omega\subseteq\mathbb{R}^{n}$.
\begin{defn}[Component-wise $ \mu   $-monotonic property] \label{mui}
$\hat{f}:\mathbb{R}^n\rightarrow I(\mathbb{R})$ is said to be
\begin{itemize}
 \item $\mu-$increasing in $\Omega$ with respect to $i^{th}$ component if
$ \mu_{\hat{f}}(x)\leq \mu_{\hat{f}}(x:ih_{i})$ whenever $x_{i}< x_{i}+h_{i}$,\\ $\forall x, (x:ih_{i})\in \Omega
$,
\item  $\mu-$decreasing in $\Omega$ with respect to $i^{th}$ component if
$\mu_{\hat{f}}(x))\geq \mu_{\hat{f}}(x:ih_{i})$ whenever $x_{i}< x_{i}+h_i$, $\forall x ,(x:ih_{i})\in \Omega$,
\item  $ \mu $-monotonic with respect to $ x_{i} $ if it is either $ \mu $-increasing or $ \mu $-decreasing with respect to $i^{th}$ component,
 \item  strictly $\mu-$increasing  (decreasing) in $\Omega$ with respect to $i^{th}$ component if
$\mu_{\hat{f}}(x) <(>) \mu_{\hat{f}}(x:ih_{i})$ whenever $x_{i}< x_{i}+h_{i},~\forall x, (x:ih_{i})\in \Omega
$, (In a similar way other strictly $ \mu $ monotonic properties can be defined.)
\item  non $ \mu $-monotonic with respect to $i^{th}$ component if  either $ \hat{f} $ is $ \mu $-increasing  with respect to $ i^{th} $ component when $ x_i+h_i<x_i $, $ h_i<0 $ and $ \mu $-decreasing with respect to $ i^{th} $ component when $ x_i<x_i+h_i $, $ h_i>0 $, or  $ \mu $-decreasing  with respect to $ i^{th} $ component when $ x_i+h_i<x_i $, $ h_i<0 $ and $ \mu $-increasing  with respect to $ i^{th} $ component when $ x_i<x_i+h_i $, $ h_i>0 $.
    \end{itemize}
\end{defn}
Using this definition it is easy to show that if $ \mu_{\hat{f}}$ is differentiable at $x$ (that is $\underline{f}$ and $\overline{f}$ are differentiable at $x$), then  $\hat{f}$ is $ \mu $-increasing or $ \mu $-decreasing at $x$ with respect to $i^{th}$ component if $ \frac{\partial \mu_{\hat{f}}(x)}{\partial x_{i}}\geq 0$ or $ \frac{\partial \mu_{\hat{f}}(x)}{\partial x_{i}}\leq 0$ respectively.
\begin{note} From Definition \ref{Partial derivatives}, one may note that existence of partial derivative of an interval valued function at a point may not guarantee  the existence of partial derivatives of the lower and upper bound functions at that point. \\
 Consider $ \hat{f}(x_{1},x_{2})=\hat{a}x_{1} \oplus \hat{b}x_{2}^{2}$ for $ \hat{a},\hat{b} \in I(\mathbb{R})$, $where$ $\mu(\hat{a})>0$.
 Therefore $ \underline{f}(x_{1},x_{2})=
	\begin{cases}
	\underline{a}x_{1}+ \underline{b}x_{2} ^2 &\text{if $ x_1 \geq 0 $}\\
	\overline{a}x_{1}+ \underline{b}x_{2} ^2 &\text{if $ x_1 < 0 $}
	\end{cases}
	$.\\   $ \overline{f}(x_{1},x_{2})=
	\begin{cases}
	\overline{a}x_{1}+ \overline{b}x_{2} ^2 &\text{if $ x_1 \geq 0 $}\\
	\underline{a}x_{1}+ \overline{b}x_{2} ^2 &\text{if $ x_1 < 0 $}
	\end{cases}
	$.\\
	One can easily check that
	$ \frac{\partial \hat{f}(0,0)}{\partial x_{1}}=\hat{a} $, where as $ \frac{\partial \underline{f}(0,0)}{\partial x_{1}} $ and $ \frac{\partial \overline{f}(0,0)}{\partial x_{1}} $ do not exist.\end{note}
\begin{theorem}\label{th}
Let $\Omega$ be an open set in $\mathbb{R}^{n}$, $ \hat{f}:\Omega\rightarrow I(\mathbb{R}) $ be  $ \hat{f}(x)=[\underline{f}(x),\overline{f}(x)] $.
\begin{enumerate}
\item If  $\frac{\partial \underline{f}(x)}{\partial x_{i}}$ and $\frac{\partial \overline{f}(x)}{\partial x_{i}}$ exist, then $ \frac{\partial \hat{f}(x)}{\partial x_{i}}$ exists and  $ \frac{\partial \hat{f}(x)}{\partial x_{i}}=\left[ \frac{\partial \underline{f}(x)}{\partial x_{i}} \vee \frac{\partial \overline{f}(x)}{\partial x_{i}}\right]$.
\item  Suppose $\frac{\partial \hat{f}(x)}{\partial x_{i}}$ exists.\\

     $ (a) $ If $ \hat{f} $  is  non $ \mu -$monotonic  with respect to $i^{th}$ component  in $ nbd(x) $ and if the lateral partial derivatives of $ \underline{f} $ and $ \overline{f} $ respectively with respect to $ x_i $ i.e. $ \left( \frac{\partial \underline{f}(x)}{\partial x_i} \right)_{-} $, $\left( \frac{\partial \underline{f}(x)}{\partial x_i} \right)_{+}  $ and $ \left( \frac{\partial \overline{f}(x)}{\partial x_i} \right)_{-} $, $ \left( \frac{\partial \overline{f}(x)}{\partial x_i} \right)_{+} $ exist , then
$     \left( \frac{\partial \overline{f}(x)}{\partial x_i} \right)_{-}=\left( \frac{\partial \overline{f}(x)}{\partial x_i} \right)_{+} $; $\left( \frac{\partial \underline{f}(x)}{\partial x_i} \right)_{+}= \left( \frac{\partial \overline{f}(x)}{\partial x_i} \right)_{-} $ hold and
     \begin{align*}
	\frac{\partial \hat{f}(x)}{\partial x_i} &= \left[ \min \left \lbrace \left( \frac{\partial \underline{f}(x)}{\partial x_i} \right)_{-}, \left( \frac{\partial \overline{f}(x)}{\partial x_i} \right)_{-}  \right \rbrace, \max \left \lbrace \left( \frac{\partial \underline{f}(x)}{\partial x_i} \right)_{-}, \left( \frac{\partial \overline{f}(x)}{\partial x_i} \right)_{-}  \right \rbrace \right]\\
	&= \left[ \min \left \lbrace \left( \frac{\partial \underline{f}(x)}{\partial x_i} \right)_{+}, \left( \frac{\partial \overline{f}(x)}{\partial x_i} \right)_{+}  \right \rbrace, \max \left \lbrace \left( \frac{\partial \underline{f}(x)}{\partial x_i} \right)_{+}, \left( \frac{\partial \overline{f}(x)}{\partial x_i} \right)_{+}  \right \rbrace \right]
	\end{align*}
   \\
 $ (b) $ If $ \hat{f} $  is  $ \mu -$monotonic  with respect to $i^{th}$ component  in $ nbd(x) $
then\\  $\frac{\partial \hat{f}(x)}{\partial x_{i}}=\begin{cases}
     \left[\frac{\partial \underline{f}(x)}{\partial x_{i}},\frac{\partial \overline{f}(x)}{\partial x_{i}}\right] &\text{ if } \hat{f} \text{ is $\mu$-increasing }\\
     \left[\frac{\partial \overline{f}(x)}{\partial x_{i}},\frac{\partial \underline{f}(x)}{\partial x_{i}}\right] &\text{ if } \hat{f} \text{ is $\mu$-decreasing }
     \end{cases}$.
     \end{enumerate}
\end{theorem}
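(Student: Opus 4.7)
The plan is to unpack Definition~\ref{Partial derivatives} by writing the $gH$-difference quotient in terms of the real difference quotients of $\underline{f}$ and $\overline{f}$, and then specialize according to the relevant $\mu$-monotonicity situation. Setting $D\underline{f}(h_i):=\underline{f}(x:ih_i)-\underline{f}(x)$ and $D\overline{f}(h_i):=\overline{f}(x:ih_i)-\overline{f}(x)$, the definition of $\ominus_{gH}$ gives
\begin{equation*}
\hat{f}(x:ih_i)\ominus_{gH}\hat{f}(x)=\bigl[\min\{D\underline{f}(h_i),\,D\overline{f}(h_i)\},\ \max\{D\underline{f}(h_i),\,D\overline{f}(h_i)\}\bigr].
\end{equation*}
Since $D\overline{f}(h_i)-D\underline{f}(h_i)=\mu_{\hat{f}}(x:ih_i)-\mu_{\hat{f}}(x)$, the sign of this quantity (governed by the $\mu$-monotonicity hypothesis) fixes which of $D\underline{f},\,D\overline{f}$ is the left endpoint, while division by $h_i$ uses the scalar-multiplication rule that is order-preserving for $h_i>0$ and endpoint-flipping for $h_i<0$.

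For Part~1, no sign hypothesis on $D\overline{f}-D\underline{f}$ is needed. Since the two-sided partials $\partial\underline{f}/\partial x_i$ and $\partial\overline{f}/\partial x_i$ exist, the real quotients $D\underline{f}(h_i)/h_i$ and $D\overline{f}(h_i)/h_i$ converge to them irrespective of the sign of $h_i$. Combining this with continuity of $\min$ and $\max$, and noting that the scalar-multiplication rule for negative $h_i$ yields the same interval as for positive $h_i$ modulo an endpoint swap that is absorbed by $\min/\max$, the limit is $\bigl[\min\{\partial\underline{f}/\partial x_i,\,\partial\overline{f}/\partial x_i\},\ \max\{\partial\underline{f}/\partial x_i,\,\partial\overline{f}/\partial x_i\}\bigr]=\bigl[\partial\underline{f}(x)/\partial x_i\vee\partial\overline{f}(x)/\partial x_i\bigr]$.

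For Part~2(b), when $\hat{f}$ is $\mu$-increasing about $x$, the quantity $D\overline{f}-D\underline{f}$ shares its sign with $h_i$. For $h_i>0$ the quotient equals $[D\underline{f}/h_i,\,D\overline{f}/h_i]\to[(\partial\underline{f}/\partial x_i)_{+},\,(\partial\overline{f}/\partial x_i)_{+}]$; for $h_i<0$ the ordering inside $\min/\max$ flips and so does the scalar-multiplication rule, so the quotient again reads $[D\underline{f}/h_i,\,D\overline{f}/h_i]\to[(\partial\underline{f}/\partial x_i)_{-},\,(\partial\overline{f}/\partial x_i)_{-}]$. Existence of $\partial\hat{f}/\partial x_i$ forces the two one-sided limits to coincide, giving the (two-sided) $\partial\underline{f}/\partial x_i,\,\partial\overline{f}/\partial x_i$ and the stated interval. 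The $\mu$-decreasing case is symmetric, with the roles of $\underline{f}$ and $\overline{f}$ interchanged.

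For Part~2(a), in the non-$\mu$-monotonic case the sign of $D\overline{f}-D\underline{f}$ is constant on $nbd(x)$ (either $\le 0$, i.e.\ $x$ is a local $\mu$-maximum along the $i$-th axis, or $\ge 0$), and is now decoupled from the sign of $h_i$. In the first sub-case the quotient tends to $[(\partial\overline{f}/\partial x_i)_{+},\,(\partial\underline{f}/\partial x_i)_{+}]$ as $h_i\to 0^{+}$ and to $[(\partial\underline{f}/\partial x_i)_{-},\,(\partial\overline{f}/\partial x_i)_{-}]$ as $h_i\to 0^{-}$; matching these one-sided limits yields the cross-identities among the lateral derivatives of $\underline{f}$ and $\overline{f}$ asserted in the statement, and either one-sided formula realises $\partial\hat{f}(x)/\partial x_i$ (the two expressions in the conclusion correspond to the ``$+$'' and ``$-$'' sides). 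The second sub-case is symmetric. The main obstacle throughout will be bookkeeping two independent sign effects---that of $h_i$ via the interval scalar-multiplication rule, and that of $D\overline{f}-D\underline{f}$ via $\min/\max$; their \emph{decoupling} in the non-monotonic case is precisely what forces the cross-identities on the lateral partials.
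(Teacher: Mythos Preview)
Your proposal is correct and follows essentially the same route as the paper: both arguments express the $gH$-difference quotient as $[\min\{D\underline{f}/h_i,D\overline{f}/h_i\},\max\{D\underline{f}/h_i,D\overline{f}/h_i\}]$, use the identity $D\overline{f}-D\underline{f}=\mu_{\hat f}(x:ih_i)-\mu_{\hat f}(x)$ to resolve which endpoint is which under each $\mu$-monotonicity hypothesis, and then match the one-sided limits (your reading of ``non-$\mu$-monotonic'' as a local $\mu$-extremum along the $i$-th axis is exactly the content of Definition~\ref{mui}). The only difference is presentational: you frame the case analysis as the interaction of two sign effects (that of $h_i$ and that of $D\overline{f}-D\underline{f}$), whereas the paper writes out the resulting inequalities and one-sided limits explicitly.
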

\begin{proof}
\begin{enumerate}
\item
\begin{align*}
\frac{\partial \hat{f}(x)}{\partial x_{i}}
&= \lim_{h_{i} \rightarrow 0}\frac{1}{h_{i}}\left( \hat{f}(x:ih_{i})\ominus_{gH} \hat{f}(x)\right)\\
&= \lim_{h_{i} \rightarrow 0}\frac{1}{h_{i}}\left(  \left[ \underline{f} (x:ih_{i}),\overline{f}(x:ih_{i})\right] \ominus_{gH}
\left[ \underline{f}(x),\overline{f}(x)\right]\right)\\
 &=\left[ \left(  \lim_{h_i \rightarrow 0}\frac{\underline{f} (x:ih)- \underline{f}(x)}{h_i}\right) \vee
\left(\lim_{h_i \rightarrow 0} \frac{\overline{f}(x:ih) -\overline{f}(x)}{h_i}\right) \right]
\\
&= \left[ \frac{\partial \underline{f}(x)}{\partial x_{i}}\vee \frac{\partial \overline{f}(x)}{\partial x_{i}}\right] \text{    ( Since $\frac{\partial \underline{f}(x)}{\partial x_{i}}  $ and $  \frac{\partial \overline{f}(x)}{\partial x_{i}} $ exist.)}
\end{align*}
Hence $ \frac{\partial \hat{f}(x)}{\partial x_{i}} $ exists.
\item
$(a)$
Suppose $ \hat{f} $ is non $ \mu $-monotonic with respect to  $ i^{th} $ component.
Then
$ \hat{f} $ is
 $ \mu $-increasing\\($\mu $-decreasing)   when $ x_i+h_i<x_i $, $ h_i<0 $ and $ \mu $-decreasing ($ \mu $-increasing)  when $ x_i<x_i+h_i $, $ h_i>0 $.
 \\ That is,
$\mu_{\hat{f}}(x:ih_i) \leq ~ ( \geq)~ \mu_{\hat{f}}(x)$  whenever $ x_i+h_i<x_i $, $ h_i<0 $ and \\$
\mu_{\hat{f}}(x:ih_i) \leq ~( \geq)~ \mu_{\hat{f}}(x)$  whenever $ x_i <x_i+h_i $, $ h_i>0 $.
 Hence
$$ \overline{f}(x:ih_i)- \overline{f}(x) \leq ~ ( \geq)~  \underline{f}(x:ih_i)- \underline{f}(x) \mbox{~~whenever~~} x_i+h_i<x_i , ~h_i<0 $$
 and
  $$ \overline{f}(x:ih_i)- \overline{f}(x) \leq ~ ( \geq)~ \underline{f}(x:ih_i)- \underline{f}(x)\mbox{~~whenever~~}x_i <x_i+h_i,~ h_i>0 .$$
  Therefore
\begin{align}
 \frac{  \underline{f}(x:ih_i)- \underline{f}(x)}{h_i} &\leq (\geq) \frac{\overline{f}(x:ih_i)- \overline{f}(x)}{h_i}  & \text{ whenever $ x_i+h_i<x_i $, $ h_i<0 $ } \label{in1} \\
 \frac{\overline{f}(x:ih_i)- \overline{f}(x)}{h_i} & \leq (\geq)   \frac{\underline{f}(x:ih_i)- \underline{f}(x)}{h_i} & \text{ whenever $ x_i <x_i+h_i $, $ h_i>0 $ } \label{in2}
\end{align}
Since $ \frac{\partial \hat{f}(x)}{\partial x_{i}} $ exists, so
\begin{equation}\label{cn1}
\frac{\partial \hat{f}(x)}{\partial x_{i}}=  \lim_{h_{i} \rightarrow 0^-} \frac{1}{h_{i}}\left( \hat{f}(x:ih_{i})\ominus_{gH} \hat{f}(x)\right)= \lim_{h_{i} \rightarrow 0^+} \frac{1}{h_{i}}\left( \hat{f}(x:ih_{i})\ominus_{gH} \hat{f}(x)\right)
\end{equation}
\begin{align*}
\frac{\partial \hat{f}(x)}{\partial x_{i}} &=\lim_{h_{i} \rightarrow 0^-} \frac{1}{h_{i}}\left( \hat{f}(x:ih_{i})\ominus_{gH} \hat{f}(x)\right)\\
&=\left[ \lim_{h_{i} \rightarrow 0^-} \frac{  \underline{f}(x:ih_i)- \underline{f}(x)}{h_i} \vee \lim_{h_{i} \rightarrow 0^-} \frac{  \overline{f}(x:ih_i)- \overline{f}(x)}{h_i}\right] \\
&=\left[ \left( \frac{\partial \underline{f}(x)}{\partial x_i} \right)_{-} \vee \left(\frac{\partial \overline{f}(x)}{\partial x_i} \right)_{-}\right]
\end{align*}
From \eqref{in1}
\begin{align}
\nonumber
&\frac{\partial \hat{f}(x)}{\partial x_{i}}= \\
& \begin{cases}
\left[ \left( \frac{\partial \underline{f}(x)}{\partial x_i} \right)_{-},\left(\frac{\partial \overline{f}(x)}{\partial x_i} \right)_{-}\right]  \text{ if $ \hat{f} $ is $ \mu $ increasing whenever  $x_{i}+h_i< x_i$, $h_i<0$}\\
\left[ \left( \frac{\partial \overline{f}(x)}{\partial x_i} \right)_{-},\left(\frac{\partial \underline{f}(x)}{\partial x_i} \right)_{-}\right]  \text{ if $ \hat{f} $ is $ \mu $ decreasing whenever  $x_{i}+h_i< x_i$, $h_i<0$}
\end{cases}
\label{in3}
\end{align}
Similarly from \eqref{in2}, it is easy to verify that
\begin{align}
\nonumber
&
\frac{\partial \hat{f}(x)}{\partial x_{i}}\\
\nonumber
&=\lim_{h_{i} \rightarrow 0^+} \frac{1}{h_{i}}\left( \hat{f}(x:ih_{i})\ominus_{gH} \hat{f}(x)\right)\\
&= \begin{cases}
\left[ \left( \frac{\partial \overline{f}(x)}{\partial x_i} \right)_{+},\left(\frac{\partial \underline{f}(x)}{\partial x_i} \right)_{+}\right]
  \text{ if $ \hat{f} $ is $ \mu $ decreasing whenever  $ x_i< x_{i}+h_i$, $h_i>0$}\\
\left[ \left( \frac{\partial \underline{f}(x)}{\partial x_i} \right)_{+},\left(\frac{\partial \overline{f}(x)}{\partial x_i} \right)_{+}\right] \text{ if $ \hat{f} $ is $ \mu $ increasing whenever $ x_i< x_{i}+h_i$, $h_i>0$}
\end{cases}
\label{in4}
\end{align}
Since $ \hat{f} $ is non $ \mu $ monotonic with respect to $ x_i $ and $\frac{\partial \hat{f}(x)}{\partial x_{i}}  $ exist, from \eqref{in3} and \eqref{in4},\\
$     \left( \frac{\partial \overline{f}(x)}{\partial x_i} \right)_{-}=\left( \frac{\partial \overline{f}(x)}{\partial x_i} \right)_{+} $ and $\left( \frac{\partial \underline{f}(x)}{\partial x_i} \right)_{+}= \left( \frac{\partial \overline{f}(x)}{\partial x_i} \right)_{-} $ hold. \\

Combining \eqref{cn1}, \eqref{in3} and \eqref{in4},
	\begin{align*}
	\frac{\partial \hat{f}(x)}{\partial x_i} &= \left[ \min \left \lbrace \left( \frac{\partial \underline{f}(x)}{\partial x_i} \right)_{-}, \left( \frac{\partial \overline{f}(x)}{\partial x_i} \right)_{-}  \right \rbrace, \max \left \lbrace \left( \frac{\partial \underline{f}(x)}{\partial x_i} \right)_{-}, \left( \frac{\partial \overline{f}(x)}{\partial x_i} \right)_{-}  \right \rbrace \right]\\
	&= \left[ \min \left \lbrace \left( \frac{\partial \underline{f}(x)}{\partial x_i} \right)_{+}, \left( \frac{\partial \overline{f}(x)}{\partial x_i} \right)_{+}  \right \rbrace, \max \left \lbrace \left( \frac{\partial \underline{f}(x)}{\partial x_i} \right)_{+}, \left( \frac{\partial \overline{f}(x)}{\partial x_i} \right)_{+}  \right \rbrace \right]
	\end{align*}
	
	 \textbf{(b)} If $ \hat{f} $ is $ \mu $-increasing with respect to $ i^{th}$ component  in $nbd(x)$ then\\
$ \mu_{\hat{f}}(x)\leq \mu_{\hat{f}}(x:ih_{i}) ~~ \text{ for } ~ x_{i}<x_{i}+h_{i} $. Hence
\begin{equation*}
 \frac{\underline{f}(x:ih_{i})-\underline{f}(x)}{h_{i}} \leq \frac{\overline{f}(x:ih_{i})-\overline{f}(x)}{h_{i}}
\end{equation*}
Similarly, if $ \hat{f} $ is $ \mu $-decreasing with respect to $ i^{th}$ component  then
 $$ \frac{\overline{f}(x:ih_{i})-\overline{f}(x)}{h_{i}} \leq  \frac{\underline{f}(x:ih)-\underline{f}(x)}{h_i} $$ Since $\hat{f}$ is $\mu$ monotonic and $\frac{\partial \hat{f}(x)}{\partial x_{i}}$ exists, so
\begin{align*}
&\frac{\partial \hat{f}(x)}{\partial x_{i}}\\
&=\lim_{h_{i} \rightarrow 0}\frac{1}{h_{i}}\left( \hat{f}(x:ih_{i})\ominus_{gH} \hat{f}(x)\right)\\
&=\begin{cases}
\left[ \left( \lim_{h_{i} \rightarrow 0}\frac{\underline{f} (x:ih_{i})- \underline{f}(x)}{h_{i}}\right) , \left(\lim_{h \rightarrow 0} \frac{\overline{f}(x:ih_{i}) - \overline{f}(x)}{h_{i}}\right) \right] & \text{ if }  \hat{f} \text{is $\mu-$ increasing in $nbd(x)$ }\\
\left[\left(\lim_{h_{i} \rightarrow 0} \frac{\overline{f}(x:ih_{i}) - \overline{f}(x)}{h_{i}}\right) , \left(\lim_{h_{i} \rightarrow 0} \frac{\underline{f} (x:ih_{i})- \underline{f}(x)}{h_{i}}\right) \right] & \text{ if }  \hat{f} \text{is $\mu-$ decreasing in $nbd(x)$}
  \end{cases}\\
&=\begin{cases}
\left[ \frac{\partial \underline{f}(x)}{\partial x_{i}}, \frac{\partial \overline{f}(x)}{\partial x_{i}}\right] &\text{if } \hat{f} \text{is $\mu-$ increasing in $\Omega$} \\
\left[  \frac{\partial \overline{f}(x)}{\partial x_{i}},\frac{\partial \underline{f}(x)}{\partial x_{i}}\right] &\text{if } \hat{f} \text{is $\mu-$ decreasing in $\Omega$ }
\end{cases}
\end{align*}
	\end{enumerate}
	Hence the theorem follows.
\end{proof}
Gradient of interval valued function at a point $ x \in \mathbb{R}^n $ is an interval vector and is denoted by $$\nabla \hat{f}(x)\triangleq(\frac{\partial \hat{f}(x)}{\partial x_{1}},\frac{\partial \hat{f}(x)}{\partial x_{2}},\cdots,\frac{\partial \hat{f}(x)}{\partial x_{n}})^{T}$$
Following result is from Ref. \cite{osuna2015optimality}, the $gH$- differentiability of $ \hat{f} $ over $ \mathbb{R}^n $.
\begin{defn}[R~Osuna-G{\'o}mez et. al\cite{osuna2015optimality}]
If all the partial derivatives of $ \hat{f}:\mathbb{R}^{n}\rightarrow I(\mathbb{R}) $ exist and continuous in the neighbourhood of  $ x\in \mathbb{R}^{n} $, then $ \hat{f} $ is $ gH$-differentiable at $x$.
\end{defn}
 Following this definition, in the light of calculus of real valued function of several variables, the  gH differentiability of $ \hat{f}: \mathbb{R}^n \rightarrow I(\mathbb{R}) $ may be restated in terms of interval valued error function. For a gH differentiable function  $\hat{f}: \mathbb{R}^n \rightarrow I(\mathbb{R}) $, partial derivatives of $\hat{f}$ exists and there exists an interval valued error function \\ $ \hat{E}_{x}: \mathbb{R}^n\rightarrow I(\mathbb{R}) $, satisfying $\displaystyle \lim_{\|h\|\rightarrow 0} \hat{E}_{x}(h)= \hat{0}$ such that \\$ \hat{w}(\hat{f}(x_0);h) \ominus_{gH} \sum\limits_{i=1}^{n}\left(  h_{i}\frac{\partial \hat{f}(x)}{\partial x_{i} }\right)=(\|h\|  \hat{E}_{x}(h)) $ hold. Using $ gH $-difference \eqref{dif},  this concept can be stated in following form.\\
  An interval valued function $ \hat{f}:\mathbb{R}^{n}\rightarrow I(\mathbb{R}) $  is  gH differentiable at $x\in \mathbb{R}^n$ if $ \nabla\hat{f}(x) \in (I(\mathbb{R}))^{n}  $ exists and there exists an interval valued error function $ \hat{E}_{x}(h) \in I(\mathbb{R}) $, $h\in \mathbb{R}^n$ such that
\begin{align}
 \hat{w}(\hat{f}(x);h)=\sum\limits_{i=1}^{n}\left(  h_{i}\frac{\partial \hat{f}(x)}{\partial x_{i} }\right) \oplus (\|h\|  \hat{E}_{x}(h))   \label{ndef1}\\
 \nonumber \mbox{ or ~~~~~~~~~~~~~~~~~~~~~~~~~~~}\\
 \hat{w}(\hat{f}(x);h)\oplus (-1)(\|h\|  \hat{E}_{x}(h)) = \sum\limits_{i=1}^{n}\left(  h_{i} \frac{\partial \hat{f}(x)}{\partial x_{i} }\right) \label{ndef2}
 \end{align}
hold for $ \|h\| < \delta $  for some $ \delta>0 $ with $\displaystyle \lim_{\|h\|\rightarrow 0} \hat{E}_{x}(h)= \hat{0}$. This form will be useful to study the differentiability of composite interval valued function in next theorem.
\begin{theorem}
\label{chainrule} 
Suppose $ \hat{f}\colon \mathbb{R}^n \to I(\mathbb{R})$, denoted by $ \hat{f}(x) \triangleq \hat{f}(x_1, x_2, \cdots, x_n) $, is an interval valued gH differentiable function at $ x_0$ and $u \colon \mathbb{R}^{m} \to \mathbb{R}^{n} $ denoted by  $ u(t) \triangleq \begin{pmatrix}
u_{1}(t_1, t_2, \cdots, t_m) &u_{2}(t_1, t_2, \cdots, t_m)& \cdots & u_{n}(t_1, t_2, \cdots, t_m)
\end{pmatrix}^T $  is differentiable at `$ a $' with Jacobian matrix  $ Du(a)$ of order $ n \times m $. If $ x_0= u(a) $ then the composite function $ \hat{G}\triangleq \hat{f}\circ u : \mathbb{R}^m \rightarrow I(\mathbb{R})  $ is gH differentiable at $ a $, and $ \nabla \hat{G}(a)=  Du(a)^{T} \nabla \hat{f}(x_0)$.
\end{theorem}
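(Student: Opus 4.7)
The plan is to reduce the gH-differentiability of $\hat{G}=\hat{f}\circ u$ at $a$ to the gH-differentiability of $\hat{f}$ at $x_0$ combined with the classical differentiability of $u$ at $a$. The central device is the real-valued induced increment $h:=u(a+k)-u(a)\in\mathbb{R}^n$ for a test increment $k\in\mathbb{R}^m$. Differentiability of $u$ at $a$ supplies
\begin{align*}
h_i = \sum_{j=1}^{m}\frac{\partial u_i(a)}{\partial t_j}\,k_j + \|k\|\,\epsilon_i(k),\qquad \epsilon_i(k)\to 0 \text{ as } \|k\|\to 0,
\end{align*}
together with $\|h\|=O(\|k\|)$, and this is all I will use about $u$.

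I would first pin down each partial derivative of $\hat{G}$ by specialising $k$ to $k_j e_j$. Applying the error-function form \eqref{ndef1} to $\hat{f}$ at $x_0$ with this one-parameter increment gives
\begin{align*}
\hat{G}(a+k_j e_j)\ominus_{gH}\hat{G}(a)=\hat{f}(x_0+h)\ominus_{gH}\hat{f}(x_0)=\sum_{i=1}^{n} h_i\,\frac{\partial\hat{f}(x_0)}{\partial x_i}\oplus\bigl(\|h\|\,\hat{E}_{x_0}(h)\bigr).
\end{align*}
I would then divide through by $k_j$ (which is legitimate because scalar multiplication distributes over Minkowski sums of intervals and is continuous in the Hausdorff metric), let $k_j\to 0$, and use $h_i/k_j\to\frac{\partial u_i(a)}{\partial t_j}$ together with the boundedness of $\|h\|/|k_j|$ and $\hat{E}_{x_0}(h)\to\hat{0}$ to annihilate the error piece. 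This would produce
\begin{align*}
\frac{\partial\hat{G}(a)}{\partial t_j}=\sum_{i=1}^{n}\frac{\partial u_i(a)}{\partial t_j}\,\frac{\partial\hat{f}(x_0)}{\partial x_i},
\end{align*}
which is precisely the $j$-th entry of $Du(a)^T\nabla\hat{f}(x_0)$.

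To upgrade this componentwise statement to full gH-differentiability of $\hat{G}$ at $a$, I would feed the full increment $k$ into \eqref{ndef1} for $\hat{f}$ and try to identify a vanishing error function $\hat{E}'_a(k)$. The bookkeeping yields two benign residuals, $\sum_i\|k\|\,\epsilon_i(k)\,\frac{\partial\hat{f}(x_0)}{\partial x_i}$ and $\|h\|\,\hat{E}_{x_0}(h)$, both of order $o(\|k\|)$ in the Hausdorff sense. The hard part will be the interval subdistributivity obstruction: identifying $\sum_{i}\bigl(\sum_j \frac{\partial u_i(a)}{\partial t_j}k_j\bigr)\frac{\partial\hat{f}(x_0)}{\partial x_i}$ with $\sum_{j}k_j\sum_i\frac{\partial u_i(a)}{\partial t_j}\frac{\partial\hat{f}(x_0)}{\partial x_i}$ is only a set inclusion in $I(\mathbb{R})$, since $(\alpha+\beta)\hat{c}\neq \alpha\hat{c}\oplus\beta\hat{c}$ whenever $\alpha$ and $\beta$ have opposite signs and $\hat{c}$ is non-degenerate. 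To reconcile this, I would switch to the companion form \eqref{ndef2}, where moving $(-1)\|k\|\hat{E}'_a(k)$ to the left via $\ominus_{gH}$-inversion lets the error interval soak up the subdistributive slack; the candidate error function $\hat{E}'_a(k)$ would then be read off by comparing both expressions and checking, via $\|h\|=O(\|k\|)$ and $\hat{E}_{x_0}(h)\to\hat{0}$, that its Hausdorff diameter is $o(1)$.
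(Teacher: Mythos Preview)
Your approach is essentially the paper's: set $v=u(a+h)-u(a)$, write $\hat{w}(\hat{G}(a);h)=\hat{w}(\hat{f}(x_0);v)$, invoke the error-function forms \eqref{ndef1}--\eqref{ndef2} for $\hat{f}$ at $x_0$, substitute the first-order Taylor expansion of $u$, and check that $\|v\|/\|h\|$ stays bounded so that the composite error $\hat{E}(h)=\tfrac{\|v\|}{\|h\|}\hat{E}_{x_0}(v)$ tends to $\hat{0}$. The paper does exactly this, only it skips your preliminary step of nailing down each $\partial\hat{G}/\partial t_j$ via $k=k_j e_j$ and goes straight to the full increment.

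Where you diverge in spirit is in your treatment of the subdistributivity obstruction. You correctly flag that $\sum_i\bigl(\sum_j c_{ij}k_j\bigr)\hat{q}_i$ and $\sum_j k_j\bigl(\sum_i c_{ij}\hat{q}_i\bigr)$ need not coincide in $I(\mathbb{R})$, and you propose absorbing the slack into the error term via form \eqref{ndef2}. The paper, by contrast, sidesteps this entirely: after dropping the $\|h\|E_a(h)$ term from $v$ it simply writes $v\approx Du(a)h$, carries the $\approx$ through to \eqref{cm5}--\eqref{cm6}, and then reads off $\nabla\hat{G}(a)=Du(a)^T\nabla\hat{f}(x_0)$ from the resulting display without ever rearranging the double sum into the $\sum_j k_j(\cdot)$ shape that the definition of gH-differentiability of $\hat{G}$ formally demands. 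In that sense your proposal is more scrupulous than the published argument; but your suggested fix (``let the error interval soak up the subdistributive slack'') is itself only a sketch, and making it rigorous would require quantifying that slack as $o(\|k\|)$, which is not automatic since the discrepancy between the two double sums is homogeneous of degree one in $k$.
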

\begin{proof}  Since $ x_0= u(a) $, the composition function $ \hat{\Phi}:= \hat{f}\circ u : \mathbb{R}^m \rightarrow I(\mathbb{R})  $ is defined in the neighbourhood of $ a $. For sufficiently small $ \Vert h \Vert $,
\begin{align}
\nonumber
\hat{w}\left( \hat{G}(a);h  \right)=\hat{G}(a+h) \ominus_{gH} \hat{G}(a)&= \hat{f}\left( u(a+h)\right) \ominus_{gH} \hat{f}\left( u(a)\right)\\ \nonumber
&=\hat{f}(x_0+v)\ominus_{gH} \hat{f}(x_0),  \text{ where } v=u(a+h)-x_0 \\
&= \hat{w}(\hat{f}(x_0);v) \label{cm4}
\end{align}
Since $ \hat{f} $ is $ gH $-differentiable, from \eqref{ndef1} and \eqref{ndef2} there exists an error function $ \hat{E}_{x_0}(h) $ such that
\begin{align}
\hat{w}(\hat{f}(x_0);v)= \left(\sum_{i=1}^{n} v_i \frac{\partial \hat{f}(x_0) }{\partial x_i}\right)\oplus \Vert v \Vert  \hat{E}_{x_0}(v) \label{nde1}\\
\nonumber \mbox{ or ~~~~~~~~~~~~~~~~~~~~~}\\
\hat{w}(\hat{f}(x_0);v) \oplus (-1)\Vert v \Vert  \hat{E}_{x_0}(v)= \left(\sum_{i=1}^{n} v_i \frac{\partial \hat{f}(x_0) }{\partial x_i}\right)\label{nde2}
\end{align}
hold for $ \Vert v \Vert < \delta^\prime $  with $ \delta^\prime > 0$  where $\underset{\Vert v \Vert \rightarrow 0}\lim \hat{E}_{x_0}(v)=\hat{0}$.
Using Taylor's expansion for  $ u $ at $ a $,
\begin{align}
v= u(a+h)-x_0= Du(a)h \mbox{ }+ \Vert h \Vert E_{a} (h) \label{cm1}\\
\nonumber
\mbox{ for~$\Vert h \Vert < \delta$~with~ $\delta>0$ ~where~$\underset{\Vert h \Vert \rightarrow 0}\lim E_{a}(h)=0$}.
\end{align}
 Ignoring the error term,  $ v \approx Du(a)h$.
From  \eqref{nde1} and \eqref{nde2},
 \begin{align}
\hat{w}(\hat{f}(x_0);v)  \approx (Du(a)h)^T  \nabla\hat{f}(x_0)  \oplus \Vert h \Vert \frac{\Vert v \Vert}{\Vert h \Vert}   \hat{E}_{x_0}(v) \label{cm5}\\
\nonumber \mbox{ or ~~~~~~~~~~~~~~~~~~~~~}\\
\hat{w}(\hat{f}(x_0);v) \oplus (-1)\Vert h \Vert \frac{\Vert v \Vert}{\Vert h \Vert}   \hat{E}_{x_0}(v)\approx (Du(a)h)^T  \nabla\hat{f}(x_0) \label{cm6}
 \end{align}
 hold. From \eqref{cm1},  $ \Vert h \Vert \rightarrow 0 $ implies $\Vert v\Vert \rightarrow 0 $.
  $  \frac{\Vert v \Vert}{\Vert h\Vert}  $  remains bounded as $ \Vert h \Vert \rightarrow 0  $ since \begin{align*}
 \Vert v \Vert & \leq \Vert  Du(a)h \Vert + \Vert h \Vert   \Vert E_{a} (h) \Vert \\
   & \leq  \Vert h \Vert  \left( M +  \Vert E_{a} (h) \Vert\right)  \text{ where } M\triangleq \sum_{i=1}^n \nabla u_i(a).
 \end{align*}
 For $ \displaystyle h \in \mathbb{R}^m,~Du(a)h = \left( \sum_{j=1}^m h_j \frac{\partial u_{i}(a)}{\partial t_j}\right) _{n \times 1}~\forall~  ~i=1,2, \cdots, n $. Therefore\\ $ \displaystyle v_i \approx \sum_{j=1}^{m} h_j \frac{\partial u_{i}(a)}{\partial t_j} ~\forall~ i=1,2, \cdots, n $.\\
 From \eqref{cm4}, \eqref{cm5} and \eqref{cm6},
 \begin{align*}
 \hat{w}\left( \hat{G}(a);h  \right)  = \left(  \sum_{i=1}^{n} \left( \sum_{j=1}^m h_j \frac{\partial u_{i}(a)}{\partial t_j}\right) \frac{\partial \hat{f}(x_0)}{\partial x_i} \right)    \oplus \Vert h \Vert  \hat{E}(h)\\
 \mbox{ or~~~~~~~~~~~~~~~~~~~~~~~~~~~~~~}\\
\hat{w}\left( \hat{G}(a);h  \right) \oplus (-1)\Vert h \Vert \hat{E}(h)= \left(  \sum_{i=1}^{n} \left( \sum_{j=1}^m h_j \frac{\partial u_{i}(a)}{\partial t_j}\right) \frac{\partial \hat{f}(x_0)}{\partial x_i} \right)
 \end{align*}
   hold  where $ \hat{E}(h)= \frac{\Vert v \Vert}{\Vert h \Vert}  \hat{E}_{x_0}(v) $ and $\hat{E}(h) \rightarrow \hat{0}$ as $ \Vert h \Vert \rightarrow 0 $.
 Hence $ \hat{G} $ is gH differentiable at $ a $ and from the above expression,  $ \nabla \hat{G}(a)=Du(a)^{T}  \nabla\hat{f}(x_0)             $.
\end{proof}
 \begin{corollary}\label{comp}
In particular for $ m=1 $ ( i.e. for $ u: \mathbb{R} \rightarrow \mathbb{R}^n $), the composite function  $ \hat{g}\triangleq \hat{f}\circ u : \mathbb{R} \rightarrow I(\mathbb{R})  $ is gH differentiable at $ a $, and $  \hat{g}^{\prime}(a)= Du(a)^{T}  \nabla \hat{f}(x_0)= \sum_{i=1}^{n} u_{i}^{\prime}(a) \frac{\partial \hat{f}(x_0)}{\partial x_i} $ where $ Du(a)= \begin{pmatrix}
u_{1}^{\prime}(a) & u_{2}^{\prime}(a)& \cdots & u_{n}^{\prime}(a)
\end{pmatrix}^T $.
\end{corollary}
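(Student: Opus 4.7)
The plan is to obtain the corollary as a direct specialization of Theorem \ref{chainrule} to the case $m=1$, so essentially no new work is required beyond unpacking what the theorem's conclusion says in this setting.

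First I would note that, with $m=1$, the map $u\colon \mathbb{R} \to \mathbb{R}^n$ is a vector-valued function of a single real variable, differentiable at $a$ with Jacobian
$$Du(a) = \begin{pmatrix} u_1'(a) & u_2'(a) & \cdots & u_n'(a) \end{pmatrix}^T \in \mathbb{R}^{n \times 1}.$$
All hypotheses of Theorem \ref{chainrule} are satisfied (gH-differentiability of $\hat{f}$ at $x_0 = u(a)$ is assumed, and $u$ is differentiable at $a$), so the theorem gives that $\hat{G} = \hat{f}\circ u\colon \mathbb{R} \to I(\mathbb{R})$ is gH-differentiable at $a$ and
$$\nabla \hat{G}(a) = Du(a)^T\,\nabla \hat{f}(x_0).$$

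Second, I would reconcile the notation. When $m=1$ the gradient $\nabla \hat{G}(a)$ collapses to the single partial derivative $\partial \hat{G}(a)/\partial t$, which by Definition \ref{Partial derivatives} coincides with the one-variable gH-derivative $\hat{g}'(a)$ of Definition \ref{Definition 2}. Finally, applying the real-vector/interval-vector product defined in item (2) of Section 2 to the $1\times n$ row $Du(a)^T$ and the interval column $\nabla \hat{f}(x_0)$ yields
$$\hat{g}'(a) = Du(a)^T\,\nabla \hat{f}(x_0) = \sum_{i=1}^{n} u_i'(a)\,\frac{\partial \hat{f}(x_0)}{\partial x_i},$$
which is the stated formula.

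There is no real obstacle here; the only point that needs a moment of care is the identification of $\nabla \hat{G}(a)$ (a $1\times 1$ interval vector) with $\hat{g}'(a)$ (an interval), but this is immediate from the definitions. Consequently the corollary follows at once from Theorem \ref{chainrule}.
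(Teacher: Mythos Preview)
Your proposal is correct and matches the paper's approach exactly: the paper simply states that the proof is straightforward from Theorem \ref{chainrule}, and your argument is precisely the specialization $m=1$ together with the identification of $\nabla\hat{G}(a)$ with $\hat{g}'(a)$ and the unpacking of $Du(a)^{T}\nabla\hat{f}(x_0)$ as the sum $\sum_{i=1}^{n} u_{i}'(a)\,\partial\hat{f}(x_0)/\partial x_i$.
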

Proof of this result is straight forward from the above theorem.
\begin{note}
From Corollary \ref{comp}, one may observe that the expression for $ \underline{g}^{\prime} (a) $ and $ \overline{g}^{\prime} (a) $ may not coincide with either the expression $ Du(a)^{T} \nabla \underline{f}(x_0) $ or $ Du(a)^{T} \nabla \overline{f}(x_0) $ in general. Under certain restrictions this condition may hold, which is discussed below.
\begin{enumerate}
\item If $ \hat{f} $ is $ \mu $ increasing (decreasing) with respect to $ x_i $ at $ x_0 $ $ \forall~i $ and  $ u_i $ is monotonically increasing (decreasing) at $ a $ $ \forall~i $, then $ \underline{g}^{\prime} (a)= Du(a)^{T} \nabla \underline{f}(x_0)  $ and $\overline{g}^{\prime} (a)= Du(a)^{T} \nabla \overline{f}(x_0)  $.
\item If $ \hat{f} $ is $ \mu $ decreasing (increasing) with respect to $ x_i $ at $ x_0 $ $ \forall~i $ and $ u_i $ is monotonically increasing (decreasing) at $ a $ $ \forall~i $, then $ \underline{g}^{\prime} (a)= Du(a)^{T} \nabla \overline{f}(x_0)  $ and $\overline{g}^{\prime} (a)= Du(a)^{T} \nabla \underline{f}(x_0)  $.
\end{enumerate}
\end{note}

The basic idea in Theorem \ref{th} can be extended to study the higher order partial derivative of interval valued function.
\begin{prop}
\begin{enumerate}
	\item  If the  partial derivatives of $ \frac{\partial \underline{f}(x)}{\partial x_i} $ and $ \frac{\partial \overline{f}(x)}{\partial x_i} $ exist with respect to $ x_j $, then the  partial derivative of $ \frac{\partial \hat{f}(x)}{\partial x_i} $ also exists with respect to $ x_j $ and
	\begin{align*}
	\frac{\partial^{2} \hat{f}(x)}{\partial x_{j}\partial x_{i}}= \left [ \frac{\partial^{2} \underline{f}(x)}{\partial x_{j}\partial x_{i}} \vee   \frac{\partial^{2} {\overline{f}(x)}}{\partial x_{j}\partial x_{i}} \right ]  .
	\end{align*}
	\item If $ \frac{\partial^{2} \hat{f}(x)}{\partial x_{j}\partial x_{i}} $ exists and $ \frac{\partial \hat{f}(x)}{\partial x_i} $ is $ \mu $ monotonic with respect to $ x_j $, then the  partial derivatives of $ \frac{\partial \underline{f}(x)}{\partial x_i} $ and $ \frac{\partial \overline{f}(x)}{\partial x_i} $  also exist with respect to $ x_j $ and \\ $ \frac{\partial^{2} \hat{f}(x)}{\partial x_{j}\partial x_{i}}= \begin{cases}
	\left [ \frac{\partial^{2} \underline{f}(x)}{\partial x_{j}\partial x_{i}} ,  \frac{\partial^{2} {\overline{f}(x)}}{\partial x_{j}\partial x_{i}} \right ]  \text{ if $\frac{\partial \hat{f}(x)}{\partial x_i} $ is $ \mu $ increasing with respect to $ x_j $  } \\
	\left [ \frac{\partial^{2} \overline{f}(x)}{\partial x_{j}\partial x_{i}} ,  \frac{\partial^{2} {\underline{f}(x)}}{\partial x_{j}\partial x_{i}} \right ]  \text{ if $\frac{\partial \hat{f}(x)}{\partial x_i} $ is $ \mu $ decreasing with respect to $ x_j $  }
		\end{cases}
		$
	
\end{enumerate}
\end{prop}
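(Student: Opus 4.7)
The plan is to apply Theorem \ref{th} to the interval-valued function $\hat{g}(x) \triangleq \frac{\partial \hat{f}(x)}{\partial x_i}$, thereby reducing second-order partial differentiation of $\hat{f}$ to a first-order statement about $\hat{g}$.

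For Part 1, the hypothesis that $\frac{\partial \underline{f}}{\partial x_i}$ and $\frac{\partial \overline{f}}{\partial x_i}$ are partially differentiable with respect to $x_j$ already presupposes the existence of these partials as real-valued functions on a neighbourhood of $x$. By Theorem \ref{th}(1) applied to the $x_i$-derivative, $\hat{g}(x) = \left[\frac{\partial \underline{f}(x)}{\partial x_i} \vee \frac{\partial \overline{f}(x)}{\partial x_i}\right]$. I would then form the gH-difference $\hat{g}(x:jh_j) \ominus_{gH} \hat{g}(x)$ and use the defining identity of $\ominus_{gH}$ together with a short case analysis on the ordering of $\frac{\partial \underline{f}(x)}{\partial x_i}$ and $\frac{\partial \overline{f}(x)}{\partial x_i}$---which is locally fixed whenever they are unequal, and handled symmetrically by the $\vee$ at the crossover---to show that the resulting limit evaluates to $\left[\frac{\partial^{2} \underline{f}(x)}{\partial x_{j}\partial x_{i}} \vee \frac{\partial^{2} \overline{f}(x)}{\partial x_{j}\partial x_{i}}\right]$. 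This is essentially Theorem \ref{th}(1) re-applied with $\hat{g}$ in place of $\hat{f}$.

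For Part 2, I would invoke Theorem \ref{th}(2)(b) with $\hat{g}$ in place of $\hat{f}$: the $\mu$-monotonicity of $\hat{g}$ with respect to $x_j$, combined with the existence of $\frac{\partial \hat{g}(x)}{\partial x_j}$, forces the lower and upper bound functions of $\hat{g}$ to be partially differentiable in $x_j$ and pins down the endpoints of $\frac{\partial \hat{g}(x)}{\partial x_j}$ in the two prescribed orders. Finally I would identify the bound functions of $\hat{g}$ with $\frac{\partial \underline{f}}{\partial x_i}$ and $\frac{\partial \overline{f}}{\partial x_i}$ (the assignment being fixed by the $\mu$-monotonicity direction of $\hat{g}$, which controls the sign of $\frac{\partial \overline{g}}{\partial x_j} - \frac{\partial \underline{g}}{\partial x_j}$), which immediately yields the two branches of the stated formula.

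The main obstacle I anticipate is the bookkeeping between two layers of $\min/\max$: the endpoint functions of $\hat{g}$ are $\min\!\left(\frac{\partial \underline{f}}{\partial x_i},\frac{\partial \overline{f}}{\partial x_i}\right)$ and $\max\!\left(\frac{\partial \underline{f}}{\partial x_i},\frac{\partial \overline{f}}{\partial x_i}\right)$, not these partials as labelled. The clean way to handle this is to argue by continuity that, unless the two partials coincide at $x$, no relabelling occurs in a neighbourhood; the coincident case must be dealt with directly from the gH-difference formula, where both one-sided limits can be checked to agree with the $\vee$ expression, closely paralleling the corresponding step in the proof of Theorem \ref{th}.
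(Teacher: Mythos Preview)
Your approach---applying Theorem \ref{th} to the interval-valued function $\hat{g}=\frac{\partial \hat{f}}{\partial x_i}$ in place of $\hat{f}$---is exactly what the paper does; its entire proof is the single sentence ``The proof of this proposition directly follows from proof of Theorem \ref{th}.'' The $\min/\max$ bookkeeping you flag (that $\underline{g},\overline{g}$ are $\min,\max$ of $\partial\underline{f}/\partial x_i,\partial\overline{f}/\partial x_i$ rather than those partials as labelled) is a genuine subtlety the paper does not address, so your treatment is in fact more careful than the original.
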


\begin{proof}
	The proof of this proposition directly follows from proof of Theorem \ref{th}.
\end{proof}
 The Hessian of $ \hat{f}(x) $ is an $ n\times n $ interval matrix denoted by  $ \nabla^{2}\hat{f}(x)$ whose $ (ij)^{th} $ component is an interval $ \frac{\partial^{2}\hat{f}(x)}{\partial x_{i} \partial x_{j}}$.
 \begin{ex}
 $\hat{f}(x_{1},x_{2})=[1,2]x_{1}^{3}e^{[1,2]x_{2}}$. Then \\
 $\hat{f}(x_{1},x_{2})=\begin{cases}
[x_{1}^{3}e^{x_{2}},2x_{1}^{3}e^{2x_{2}}] &\text{ when } x_{1}\geq 0,x_{2}\geq 0 \\
[2x_{1}^{3}e^{2x_{2}}, x_{1}^{3}e^{x_{2}}] &\text{ when } x_{1}\leq 0, x_{2}\geq 0 \\
[2x_{1}^{3}e^{x_{2}}, x_{1}^{3}e^{2x_{2}}] &\text{ when } x_{1}\leq 0, x_{2}\leq 0 \\
[x_{1}^{3}e^{2x_{2}}, 2x_{1}^{3}e^{x_{2}}] &\text{ when } x_{1}\geq 0, x_{2}\leq 0 \\
\end{cases}$.\\
Consider $\hat{f}(x_{1},x_{2})=[2x_{1}^{3}e^{x_{2}},x_{1}^{3}e^{2x_{2}}]$ for $ x_{1}\leq 0,x_{2}\leq 0 $. $ \mu_{\hat{f}}(x_1, x_2)= x_{1}^{3}e^{2x_{2}}- 2x_{1}^{3}e^{x_{2}}  $. $ \hat{f} $ is $ \mu $ decreasing with respect to $ x_1 $ and $ \mu $ increasing with respect to $ x_2 $. \\
Therefore $\frac{\partial \hat{f}}{\partial x_{1}}=[3x_{1}^{2}e^{2x_{2}},6x_{1}^{2}e^{x_{2}}]$, $\frac{\partial \hat{f}}{\partial x_{2}}=[2x_{1}^{3}e^{x_{2}},2x_{1}^{3}e^{2x_{2}}]$.\\
$\frac{\partial \hat{f}}{\partial x_{1}}$  and $\frac{\partial \hat{f}}{\partial x_{2}}$, both are $ \mu $ decreasing with respect to $ x_1 $ and $ \mu $ increasing with respect to $ x_2 $. Therefore $\frac{\partial^{2} \hat{f}}{\partial x_{1}^{2}}=[12x_{1}e^{x_{2}},6x_{1}e^{2x_{2}}]$, $\frac{\partial^{2} \hat{f}}{\partial x_{2}^{2}}=[2x_{1}^{3}e^{x_{2}},4x_{1}^{3}e^{2x_{2}}]$, $\frac{\partial^{2} \hat{f}}{\partial x_{1}x_{2}}=[6x_{1}^2e^{2x_{2}},6x_{1}^2e^{x_{2}}]=\frac{\partial^{2} \hat{f}}{\partial x_{2}x_{1}} $.\\
Hence Hessian of $\hat{f}$ at $(-1,-1)$ becomes
 $\nabla^{2}\hat{f}(-1,-1)=\begin{pmatrix}
[-12e^{-1},-6e^{-2}]& [6e^{-2},6e^{-1}]\\
[6e^{-2},6e^{-1}] & [-2e^{-1},-4e^{-2}]
\end{pmatrix}
$.
\end{ex}

\section{Expansion of interval valued function}
\subsection{Expansion of interval valued function over $\mathbb{R}$}
From Definition \ref{Definition 2}, one may conclude that  $ \hat{f} $ is n times gH differentiable at $ x $ if $\underset{h \rightarrow 0} \lim \frac{\hat{f}^{(n-1)}(x+h) \ominus_{gH}\hat{f}^{(n-1)}(x) }{h} $ exists. The limiting value is called the $n^{th}$ order gH derivative of $ \hat{f} $ at $ x $ and denoted by $\hat{f}^{n}(x)  $.

\begin{prop}\label{lma}
Suppose $ g:\mathbb{R}\rightarrow \mathbb{R}  $ is a real valued differentiable function  and  $ \hat{f}:\mathbb{R}\rightarrow I(\mathbb{R}) $ be first order gH differentiable and $ \mu $ monotonic function. Then $( g\hat{f})$ is gH differentiable and $( g\hat{f})^{'} (x)= [(g \underline{f})^{\prime}(x) \vee (g \overline{f})^{\prime}(x)] $.
\end{prop}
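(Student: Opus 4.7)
The plan is to reduce Proposition \ref{lma} to Theorem \ref{th}(1) by showing that $(g\hat{f})(x+h)\ominus_{gH}(g\hat{f})(x)$, when expanded via the min/max formula, has the same form regardless of the sign of $g$, and then dividing by $h$ and letting $h\to0$.

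First I would use the hypotheses to extract ordinary differentiability of the endpoints. Since $\hat{f}:\mathbb{R}\to I(\mathbb{R})$ is gH differentiable and $\mu$ monotonic, the single-variable analogue of Theorem \ref{th}(2b) guarantees that $\underline{f}'(x)$ and $\overline{f}'(x)$ both exist, with $\hat{f}'(x)=[\underline{f}'(x)\vee\overline{f}'(x)]$. Combined with the differentiability of $g$, the standard product rule gives that the two real-valued functions $g\underline{f}$ and $g\overline{f}$ are differentiable at $x$, so the quantities $(g\underline{f})'(x)$ and $(g\overline{f})'(x)$ in the statement are well defined.

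Next, I would compute the gH-difference $(g\hat{f})(x+h)\ominus_{gH}(g\hat{f})(x)$ explicitly. Write
\[
A(h)\triangleq g(x+h)\underline{f}(x+h)-g(x)\underline{f}(x),\qquad B(h)\triangleq g(x+h)\overline{f}(x+h)-g(x)\overline{f}(x).
\]
The endpoints of $g(\cdot)\hat{f}(\cdot)$ depend on the sign of $g$, so I would split into cases: (i) $g(x+h),g(x)\ge 0$, (ii) $g(x+h),g(x)\le 0$, (iii) the mixed cases, which by continuity of $g$ for small $|h|$ force $g(x)=0$. In case (i) both lower endpoints are of the form $g\underline{f}$ and both upper endpoints are $g\overline{f}$, so the min-max formula for $\ominus_{gH}$ produces $[\min\{A,B\},\max\{A,B\}]$. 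In case (ii) the endpoints swap on each interval, but $A$ and $B$ swap in matching fashion, so the min-max formula again gives $[\min\{A,B\},\max\{A,B\}]$. The degenerate case $g(x)=0$ is handled similarly, using $g(x)\underline{f}(x)=g(x)\overline{f}(x)=0$. Thus uniformly
\[
(g\hat{f})(x+h)\ominus_{gH}(g\hat{f})(x)=\bigl[A(h)\vee B(h)\bigr].
\]

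Finally, I would divide by $h$ and take the limit. For any real numbers $p,q$ and any $h\ne 0$, one checks that $\frac{1}{h}[\min\{p,q\},\max\{p,q\}]=[\,p/h\vee q/h\,]$, because if $h<0$ the multiplication by $1/h$ flips the endpoints, which is compensated by the reorientation of $\min,\max$. Hence
\[
\frac{1}{h}\bigl((g\hat{f})(x+h)\ominus_{gH}(g\hat{f})(x)\bigr)=\Bigl[\frac{A(h)}{h}\vee \frac{B(h)}{h}\Bigr].
\]
As $h\to 0$, $A(h)/h\to (g\underline{f})'(x)$ and $B(h)/h\to (g\overline{f})'(x)$ by the ordinary product rule applied to the differentiable functions $g,\underline{f},\overline{f}$. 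Passing to the limit inside the $[\,\cdot\vee\cdot\,]$ bracket (which is continuous in its two arguments) yields the desired formula $(g\hat{f})'(x)=[(g\underline{f})'(x)\vee(g\overline{f})'(x)]$.

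I expect the main obstacle to be the bookkeeping in the case analysis: the two intervals $g(x+h)\hat{f}(x+h)$ and $g(x)\hat{f}(x)$ can have either order of endpoints depending on the signs of $g(x+h)$ and $g(x)$, so one must verify carefully that after applying the $\ominus_{gH}$ min-max formula the answer collapses to the single expression $[A(h)\vee B(h)]$ in every scenario. The mixed-sign case requires invoking continuity of $g$ to reduce to $g(x)=0$, which then trivializes that subcase. Once this uniform representation is established, the remaining steps are essentially calculus.
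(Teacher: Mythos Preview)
Your proposal is correct and follows essentially the same approach as the paper: both arguments unpack $(g\hat{f})(x+h)\ominus_{gH}(g\hat{f})(x)$ into the form $[A(h)\vee B(h)]$ using the sign of $g$, then invoke $\mu$-monotonicity plus gH-differentiability of $\hat{f}$ to ensure $\underline{f}',\overline{f}'$ exist so that the ordinary product rule applies to each endpoint. Your case analysis is in fact slightly more careful than the paper's, which simply asserts that ``for sufficiently small $h$, $g(x+h)$ and $g(x)$ are of same sign'' without separately addressing the possibility $g(x)=0$; you handle that degenerate case explicitly.
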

\begin{proof}
$$( g\hat{f})^{\prime}(x)=\lim_{h\rightarrow 0}\frac{(g\hat{f})(x+h)\ominus_{gH} (g\hat{f})(x)}{h}$$
$$=\lim_{h\rightarrow 0}\frac{g(x+h)[\underline{f}(x+h),\overline{f}(x+h)]\ominus_{gH}g(x)[\underline{f}(x),\overline{f}(x)]}{h}$$
Since $ g $ is differentiable, for sufficiently small $ h $, $ g(x+h) $ and $ g(x) $ are of same sign.
$$( g\hat{f})^{\prime}(x) =\lim_{h\rightarrow 0}\left[ \frac{g(x+h)\underline{f}(x+h)-g(x)\underline{f}(x)}{h} \vee   \frac{g(x+h)\overline{f}(x+h)-g(x)\overline{f}(x)}{h}\right] $$
Since $\hat{f}$  is  gH differentiable and $\mu$-monotonic and $ g $ is differentiable, $ g\underline{f} $ and $ g \overline{f} $ are differentiable. Hence
\begin{align*}
&\lim_{h\rightarrow 0}\frac{g(x+h)\underline{f}(x+h)-g(x)\underline{f}(x)}{h} \\
=&\lim_{h\rightarrow 0}\frac{g(x+h)-g(x)}{h}\underline{f}(x)+\lim_{h\rightarrow 0}g(x+h)\frac{\underline{f}(x+h)-\underline{f}(x)}{h} \\
=&g^{\prime}(x)\underline{f}(x)+g(x)\underline{f}^{\prime}(x)=(g \underline{f})^{\prime}(x)
\end{align*}
In a similar way, it is easy to verify that $\lim_{h\rightarrow 0} \frac{g(x+h)\overline{f}(x+h)-g(x)\overline{f}(x)}{h}=(g \overline{f})^{\prime}(x)$.\\
Hence $
( g\hat{f})^{\prime}(x) = [(g \underline{f})^{\prime}(x) \vee (g \overline{f})^{\prime}(x)]$.
\end{proof}

Chalco et. al. \cite{chalco2011generalized} justified that the concept of $ gH $ difference is same as Markov difference($\ominus_{M}$), introduced by S. Markov \cite{markov1977extended} in case of compact set of intervals.  Therefore Theorem 7 and Theorem 9 from Ref. \cite{markov1979calculus} can be restated in terms of gH difference as in Theorem \ref{Theorem 7} and Theorem \ref{Mean-Value theorem for interval functions} below.  Theorem \ref{Theorem 7} is same as Theorem 7 of Ref. \cite{markov1979calculus} and Theorem \ref{Mean-Value theorem for interval functions} is same as Theorem 9 of Ref. \cite{markov1979calculus}, obtained by replacing Markov difference with gH difference. Hence the proofs are omitted. These two results are used for the theoretical developments in future part of this article.
\begin{theorem}[S. Markov \cite{markov1979calculus}]
\label{Theorem 7}
Suppose $ \hat{f},\hat{g}:\Omega\subseteq \mathbb{R}\rightarrow I(\mathbb{R}) $ are $ \mu $-monotonic and gH differentiable in $ \Omega $.
\begin{enumerate}[(i)]
\item  If $ \hat{f} $ and $ \hat{g} $ are equally $ \mu $-monotonic (both are $ \mu $-increasing or $ \mu $-decreasing) then $ (\hat{f}\oplus\hat{g})^{\prime}=\hat{f}^{\prime}\oplus \hat{g}^{\prime} $ and $(\hat{f}\ominus_{gH}\hat{g})^{\prime}=\hat{f}^{\prime}\ominus_{gH} \hat{g}^{\prime}   $;
\item  If $ \hat{f} $ and $ \hat{g} $ are differently $ \mu $-monotonic(one is  $ \mu $-increasing and the other is $ \mu $-decreasing) then $ (\hat{f}\oplus\hat{g})^{\prime}=\hat{f}^{\prime}\ominus_{gH}(\ominus_{gH}\hat{g}^{\prime}) $ and $(\hat{f}\ominus_{gH}\hat{g})^{\prime}=\hat{f}^{\prime}\oplus(\ominus_{gH} \hat{g}^{\prime}) $.
\end{enumerate}
\end{theorem}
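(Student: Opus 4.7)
The plan is to reduce the theorem to routine real-variable calculations by exploiting the explicit formulas from Theorem \ref{th}, part 2(b), specialized to $n=1$. Under the hypothesis that $\hat{f}$ is gH-differentiable and $\mu$-monotonic, the endpoint functions $\underline{f}$ and $\overline{f}$ are themselves differentiable, and
\[
\hat{f}'(x) = \begin{cases} [\underline{f}'(x),\overline{f}'(x)], & \text{if $\hat{f}$ is $\mu$-increasing,} \\ [\overline{f}'(x),\underline{f}'(x)], & \text{if $\hat{f}$ is $\mu$-decreasing,} \end{cases}
\]
and analogously for $\hat{g}$. Everything below rests on two spread identities: $\mu_{\hat{f}\oplus\hat{g}}(x)=\mu_{\hat{f}}(x)+\mu_{\hat{g}}(x)$ and $\mu_{\hat{f}\ominus_{gH}\hat{g}}(x)=\lvert \mu_{\hat{f}}(x)-\mu_{\hat{g}}(x)\rvert$.

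For case (i), take both functions $\mu$-increasing (the doubly-decreasing subcase is symmetric). The first spread identity forces $\hat{f}\oplus\hat{g}$ to be $\mu$-increasing, so Theorem \ref{th} applied to $\hat{f}\oplus\hat{g}$ yields $(\hat{f}\oplus\hat{g})'(x)=[\,\underline{f}'(x)+\underline{g}'(x),\,\overline{f}'(x)+\overline{g}'(x)\,]$, which coincides term by term with $\hat{f}'(x)\oplus\hat{g}'(x)$. For the $\ominus_{gH}$ identity one works locally on each open set where $\mu_{\hat{f}}-\mu_{\hat{g}}$ has a fixed sign, so that the endpoints of $\hat{f}\ominus_{gH}\hat{g}$ are unambiguously identified as one of $(\underline{f}-\underline{g},\overline{f}-\overline{g})$ or $(\overline{f}-\overline{g},\underline{f}-\underline{g})$; in either event a direct substitution reproduces $\hat{f}'\ominus_{gH}\hat{g}'$.

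In case (ii), $\hat{f}\oplus\hat{g}$ and $\hat{f}\ominus_{gH}\hat{g}$ need not be $\mu$-monotonic, so one falls back on Theorem \ref{th}, part 1, which requires only differentiability of the endpoint functions and gives
\[
(\hat{f}\oplus\hat{g})'(x)=\bigl[\,(\underline{f}+\underline{g})'(x)\vee(\overline{f}+\overline{g})'(x)\,\bigr].
\]
On the other side, an algebraic expansion of $\hat{f}'\ominus_{gH}(\ominus_{gH}\hat{g}')$ using the $\ominus_{gH}$ formula and the oppositely-oriented expressions of $\hat{f}'$, $\hat{g}'$ from the first paragraph shows that both sides reduce to $[\,\min\{\underline{f}'+\underline{g}',\overline{f}'+\overline{g}'\},\,\max\{\underline{f}'+\underline{g}',\overline{f}'+\overline{g}'\}\,]$. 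The $\ominus_{gH}$ identity is treated analogously: because the spreads of $\hat{f}'$ and $\ominus_{gH}\hat{g}'$ have the same orientation in this case, $\hat{f}'\oplus(\ominus_{gH}\hat{g}')$ is an ordinary interval sum that again matches the $\vee$-expression obtained for $(\hat{f}\ominus_{gH}\hat{g})'$.

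The main obstacle is precisely the $\ominus_{gH}$ identities, because the endpoint functions of $\hat{f}\ominus_{gH}\hat{g}$ are $\min$/$\max$ of the two candidates $\underline{f}-\underline{g}$ and $\overline{f}-\overline{g}$ and may fail to be differentiable on the switching set $\{x:\mu_{\hat{f}}(x)=\mu_{\hat{g}}(x)\}$. One must either exclude this set, verify that the two candidate formulas for $(\hat{f}\ominus_{gH}\hat{g})'$ agree there, or retreat to the limit definition and check the claim pointwise by splitting on the sign of $\mu_{\hat{f}}-\mu_{\hat{g}}$. Once that bookkeeping is under control, the rest is mechanical interval arithmetic.
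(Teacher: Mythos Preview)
The paper does not actually prove this theorem. Immediately before the statement it observes that the gH difference coincides with Markov's difference on compact intervals (via Chalco-Cano et~al.) and then imports Theorem~7 of Markov's 1979 paper verbatim, explicitly writing ``Hence the proofs are omitted.'' So there is no paper-proof to compare against; your proposal supplies what the paper outsources to a citation.

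Your route---reducing everything to endpoint calculus via Theorem~\ref{th} together with the spread identities $\mu_{\hat f\oplus\hat g}=\mu_{\hat f}+\mu_{\hat g}$ and $\mu_{\hat f\ominus_{gH}\hat g}=\lvert\mu_{\hat f}-\mu_{\hat g}\rvert$---is sound, and you have correctly isolated the only genuine subtlety: the switching set $\{x:\mu_{\hat f}(x)=\mu_{\hat g}(x)\}$, where the endpoint functions of $\hat f\ominus_{gH}\hat g$ may fail to be differentiable even though its gH derivative still exists. Your third suggested fix (a direct limit computation at such points) does resolve it, since there $\underline f-\underline g=\overline f-\overline g$ and the incremental quotient collapses to $[\,(\underline f'-\underline g')\vee(\overline f'-\overline g')\,]$, which matches $\hat f'\ominus_{gH}\hat g'$ in case~(i) and $\hat f'\oplus(\ominus_{gH}\hat g')$ in case~(ii). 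One small remark: in case~(i) for the $\ominus_{gH}$ identity, $\hat f\ominus_{gH}\hat g$ need not be $\mu$-monotonic even off the switching set (both spreads increasing says nothing about their difference), so it is part~1 of Theorem~\ref{th}, not part~2(b), that you are invoking there---your wording already reflects this, but it is worth being explicit. What your approach buys is a self-contained argument that stays within the paper's own toolkit; what the paper's approach buys is a one-line citation.
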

\begin{theorem}[S. Markov \cite{markov1979calculus}]
\label{Mean-Value theorem for interval functions}
If $ \hat{f}:\mathbb{R}\rightarrow I( \mathbb{R})$ is continuous in $ \Delta$, where $ \Delta=\left[\alpha,\beta \right]$ and gH differentiable in $(\alpha,\beta ) $,then $ \hat{f}(\beta)\ominus_{gH} \hat{f}(\alpha)\subset \hat{f}^{\prime}(\Delta)(\beta-\alpha) $,where $ \hat{f}^{\prime}(\Delta)=\cup_{\xi \in \Delta}\hat{f}^{\prime}(\xi)$.
\end{theorem}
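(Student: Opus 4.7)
\emph{Proof proposal.}
The natural plan is to reduce the interval-valued claim to the classical real Mean Value Theorem applied to the endpoint functions $\underline{f}$ and $\overline{f}$, and then use the interval structure of $\hat{f}'(\xi)$ described in Theorem~\ref{th} to place the resulting values inside $\hat{f}'(\Delta)$.

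First I would unfold the gH difference using its explicit formula, obtaining
\[
\hat{f}(\beta)\ominus_{gH}\hat{f}(\alpha)=\bigl[\,(\underline{f}(\beta)-\underline{f}(\alpha))\vee(\overline{f}(\beta)-\overline{f}(\alpha))\,\bigr],
\]
so that the two endpoints of the left-hand side are exactly the signed increments of $\underline{f}$ and $\overline{f}$ over $\Delta$. The hypotheses on $\hat{f}$ transfer continuity and (at least one-sided) differentiability to $\underline{f}$ and $\overline{f}$; in the $\mu$-monotonic case this is given directly by Theorem~\ref{th}(2b), and in the non $\mu$-monotonic case the lateral identities of Theorem~\ref{th}(2a) suffice to apply the real MVT on each one-sided piece. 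Applying the classical MVT separately to $\underline{f}$ and $\overline{f}$ produces $\xi_{1},\xi_{2}\in(\alpha,\beta)$ with
\[
\underline{f}(\beta)-\underline{f}(\alpha)=\underline{f}'(\xi_{1})(\beta-\alpha),\qquad \overline{f}(\beta)-\overline{f}(\alpha)=\overline{f}'(\xi_{2})(\beta-\alpha).
\]
By Theorem~\ref{th}, $\underline{f}'(\xi_{1})$ is an endpoint of $\hat{f}'(\xi_{1})$ and $\overline{f}'(\xi_{2})$ is an endpoint of $\hat{f}'(\xi_{2})$, so both endpoints of $\hat{f}(\beta)\ominus_{gH}\hat{f}(\alpha)$ already lie in $\hat{f}'(\Delta)(\beta-\alpha)$.

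The main obstacle is that $\hat{f}'(\Delta)=\bigcup_{\xi\in\Delta}\hat{f}'(\xi)$ is a union of intervals and is not, a priori, convex; endpoint containment alone is weaker than the claimed whole-interval containment, so the decisive step is a connectedness argument. For this I would invoke Darboux's intermediate value theorem for derivatives, which forces each of $\underline{f}'(\Delta)$ and $\overline{f}'(\Delta)$ to be an interval of $\mathbb{R}$, and combine it with the observation that for every $\xi\in\Delta$ the set $\hat{f}'(\xi)$ is the closed interval with endpoints $\underline{f}'(\xi)$ and $\overline{f}'(\xi)$. A short topological argument based on these two facts shows that $\hat{f}'(\Delta)$ is itself connected in $\mathbb{R}$, hence an interval. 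Since this interval contains both $\underline{f}'(\xi_{1})$ and $\overline{f}'(\xi_{2})$, it contains the entire closed interval between them; multiplying by the scalar $(\beta-\alpha)$ preserves the containment and yields $\hat{f}(\beta)\ominus_{gH}\hat{f}(\alpha)\subset\hat{f}'(\Delta)(\beta-\alpha)$, as required.
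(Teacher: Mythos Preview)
The paper does not give its own proof of this theorem. Immediately before the statement it explains that Theorem~\ref{Theorem 7} and Theorem~\ref{Mean-Value theorem for interval functions} are Theorems~7 and~9 of Markov~\cite{markov1979calculus}, restated with $\ominus_{gH}$ in place of the Markov difference, and that ``the proofs are omitted.'' So there is no in-paper argument to compare your proposal against; any comparison would have to be with Markov's original 1979 proof, which the present paper does not reproduce.

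On the merits of your proposal itself, the principal gap is the first reduction step. The hypotheses of the theorem assert only continuity of $\hat{f}$ on $\Delta$ and gH-differentiability on $(\alpha,\beta)$; they do \emph{not} give two-sided differentiability of the endpoint functions $\underline{f}$ and $\overline{f}$. The paper's own Note~1 makes exactly this point (in the partial-derivative setting), and in one variable the same phenomenon occurs: for $\hat{f}(x)=[-|x|,|x|]$ one checks directly that $\hat{f}'(0)=[-1,1]$ exists while $\underline{f}(x)=-|x|$ and $\overline{f}(x)=|x|$ are not differentiable at $0$. Your appeal to Theorem~\ref{th} does not close this: part~(2b) supplies differentiability of $\underline{f},\overline{f}$ only under an extra $\mu$-monotonicity hypothesis that Theorem~\ref{Mean-Value theorem for interval functions} does not assume, and part~(2a) takes the existence of the lateral derivatives as an \emph{assumption}, not a conclusion. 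Without two-sided derivatives of $\underline{f}$ and $\overline{f}$ on all of $(\alpha,\beta)$ you cannot invoke the classical Mean Value Theorem, nor Darboux's theorem for the connectedness step; the phrase ``apply the real MVT on each one-sided piece'' is not a substitute, since nothing guarantees finitely many $\mu$-monotonic pieces or that a one-sided-derivative version of the MVT delivers what you need. Your outline is a natural strategy, but as written it rests on regularity of $\underline{f}$ and $\overline{f}$ that the hypotheses do not provide.
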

\begin{theorem}[Expansion theorem of single variable interval valued function]
\label{Expansion theorem of single variable for interval valued function}
 Let $ \hat{f}:\mathbb{R}\rightarrow I(\mathbb{R}) $ be  such that $\hat{f}^{\prime},\hat{f}^{\prime \prime},\cdots, \hat{f}^{n}$ exist and $\mu-$monotonic over $\Delta$, where $\Delta =[a,x]$. Moreover consider an interval valued function $ \hat{\Phi}:\Delta\rightarrow I(\mathbb{R}) $ as\\
$ \hat{\Phi}(t)= \sum_{i=1}^{n} \hat{ \phi_{i}}(t)$, where $\hat{ \phi_{i}}(t)= \alpha_i (t) \hat{f}^{(i-1)}(t)$ with $ \alpha_{i}(t)= \frac{(x-t)^{i-1}}{(i-1)!} $ such that $ \hat{ \phi_{i}} $ is $ \mu $ monotonic for each $i$.
Then for any $x\in (a,x]$,
\begin{equation}\label{eq}
\begin{multlined}
 \hat{f}(x)\ominus_{gH}\left\lbrace \hat{f}(a)\oplus (x-a)\hat{f}^{'}(a) \oplus \frac{(x-a)^{2}}{2!}\hat{f}^{''}(a) \oplus \cdots    \oplus \frac{(x-a)^{n-1}}{(n-1)!}\hat{f}^{n-1}(a)  \right \rbrace \\  \subset\cup_{\theta \in [0,1]}\frac{(x-a)^{n}(1-\theta)^{n-1}}{(n-1)!}\hat{f}^{n}(a+\theta(x-a))
 \end{multlined}
\end{equation}
\end{theorem}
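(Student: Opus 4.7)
The plan is to mimic the classical Cauchy-remainder proof of Taylor's theorem inside the $gH$-calculus, with the prescribed auxiliary $\hat\Phi$ serving as the bridge between $\hat f(x)$ and the truncated Taylor polynomial at $a$. First I would evaluate $\hat\Phi$ at the two endpoints of $\Delta$. Since $\alpha_1\equiv 1$ and $\alpha_i(x)=0$ for $i\geq 2$, only the first summand survives at $t=x$, giving $\hat\Phi(x)=\hat f(x)$. At $t=a$, $\alpha_i(a)=\frac{(x-a)^{i-1}}{(i-1)!}$, so
$$\hat\Phi(a) = \hat f(a)\oplus (x-a)\hat f'(a)\oplus \cdots \oplus \tfrac{(x-a)^{n-1}}{(n-1)!}\hat f^{(n-1)}(a),$$
which is exactly the braced expression on the left of \eqref{eq}. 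Consequently, the left-hand side of \eqref{eq} coincides with $\hat\Phi(x)\ominus_{gH}\hat\Phi(a)$.

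Next I would compute $\hat\Phi'(t)$. Each $\hat\phi_i(t)=\alpha_i(t)\hat f^{(i-1)}(t)$ is the product of a smooth real scalar function and a $gH$-differentiable $\mu$-monotonic interval function, and is itself $\mu$-monotonic by hypothesis; Proposition \ref{lma} therefore applies. Combined with the identity $\alpha_i'(t)=-\alpha_{i-1}(t)$ (with the convention $\alpha_0\equiv 0$), it yields
$$\hat\phi_i'(t)= -\alpha_{i-1}(t)\hat f^{(i-1)}(t)\oplus\alpha_i(t)\hat f^{(i)}(t).$$
Summing over $i$ via the rules of Theorem \ref{Theorem 7} and using the $\mu$-monotonicity of the $\hat\phi_i$ at each stage, the piece $\alpha_i(t)\hat f^{(i)}(t)$ in $\hat\phi_i'$ combines with $-\alpha_i(t)\hat f^{(i)}(t)$ in $\hat\phi_{i+1}'$ as a genuine $\ominus_{gH}$ and telescopes to $\hat 0$, so only the top term remains, giving $\hat\Phi'(t)=\frac{(x-t)^{n-1}}{(n-1)!}\hat f^{(n)}(t)$.

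I would then apply the interval mean-value theorem (Theorem \ref{Mean-Value theorem for interval functions}) to $\hat\Phi$ on $\Delta=[a,x]$, which is legitimate because the existence of $\hat f^{(n)}$ on $\Delta$ forces $\hat\Phi$ to be continuous on $\Delta$ and $gH$-differentiable on $(a,x)$. This yields
$$\hat\Phi(x)\ominus_{gH}\hat\Phi(a)\;\subset\;\hat\Phi'(\Delta)(x-a)=\bigcup_{\xi\in[a,x]}\tfrac{(x-\xi)^{n-1}}{(n-1)!}\hat f^{(n)}(\xi)(x-a).$$
Parametrizing $\xi=a+\theta(x-a)$ with $\theta\in[0,1]$, so that $x-\xi=(1-\theta)(x-a)$, the right-hand side becomes $\bigcup_{\theta\in[0,1]}\frac{(x-a)^n(1-\theta)^{n-1}}{(n-1)!}\hat f^{(n)}(a+\theta(x-a))$, and substituting the identifications of $\hat\Phi(x)$ and $\hat\Phi(a)$ from the first step produces exactly \eqref{eq}.

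The delicate step will be the telescoping of $\hat\Phi'(t)$: interval addition lacks genuine additive inverses, so the term $\alpha_i(t)\hat f^{(i)}(t)$ appearing with opposite signs in $\hat\phi_i'$ and $\hat\phi_{i+1}'$ cannot be cancelled merely by syntactic manipulation inside an $\oplus$-sum. Each cancellation must be routed through Theorem \ref{Theorem 7}, which requires the two summands to be comparably $\mu$-monotonic so that the $\oplus$ of a term and its $(-1)$-multiple can be rewritten as the $\ominus_{gH}$ of equal intervals, equal to $\hat 0$. The standing assumption that every $\hat\phi_i$ is $\mu$-monotonic on $\Delta$ is precisely what supplies this compatibility at each stage, and verifying the sign/monotonicity bookkeeping needed to invoke Proposition \ref{lma} and Theorem \ref{Theorem 7} uniformly across the sum is where the real work of the proof lies.
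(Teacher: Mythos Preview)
Your proposal is correct and follows essentially the same route as the paper: identify $\hat\Phi(x)\ominus_{gH}\hat\Phi(a)$ with the left side of \eqref{eq}, show $\hat\Phi'(t)=\tfrac{(x-t)^{n-1}}{(n-1)!}\hat f^{(n)}(t)$ via Proposition~\ref{lma} and Theorem~\ref{Theorem 7}, then apply Theorem~\ref{Mean-Value theorem for interval functions} and reparametrize by $\theta$. One small caution: Proposition~\ref{lma} does not directly deliver the interval-level product rule $\hat\phi_i'(t)=-\alpha_{i-1}(t)\hat f^{(i-1)}(t)\oplus\alpha_i(t)\hat f^{(i)}(t)$ that you write; it only gives $\hat\phi_i'=[(\alpha_i\underline f^{(i-1)})'\vee(\alpha_i\overline f^{(i-1)})']$, and the paper accordingly performs the telescoping at the $\underline f,\overline f$ endpoint level, splitting into the cases where the $\hat\phi_i$ are equally or differently $\mu$-monotonic and checking $n=2,3$ explicitly before asserting the general pattern---this is precisely the ``sign/monotonicity bookkeeping'' you correctly isolate as the crux.
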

\begin{proof}
In explicit form, $ \hat{\Phi}(t) $ can be written as
\begin{equation}\label{eq 1}
\hat{\Phi}(t)=\hat{f}(t) \oplus (x-t)\hat{f}^{\prime}(t) \oplus \frac{(x-t)^{2}}{2!}\hat{f}^{\prime \prime}(t)\oplus \cdots \oplus \frac{(x-t)^{n-1}}{(n-1)!}\hat{f}^{n-1}(t)
\end{equation}
Here $ \hat{f},\hat{f}^{\prime},\cdots,\hat{f}^{n-1},\hat{f}^{n} $ exist and $ \mu $ monotonic over $\Delta$.  $ \alpha_i (t) $ is differentiable in $ \Delta $, so using Proposition \ref{lma}, $\hat{\phi}_{i}(t) $ is differentiable for each i. Hence differentiability of $ \hat{\Phi} $ in  $nbd(a)$ follows from Theorem \ref{Theorem 7}.
Here two possible cases may arise.\\
\textbf{Case 1}.  Suppose for each $ i $, $ \hat{\phi_{i}}(t)$  are  equally $ \mu- $ monotonic in $ \Delta $. Assume that each $ \hat{\phi_{i}}(t)$ is $ \mu $ increasing.
In particular let $ n $ be even and $ n=2 $. From \eqref{eq 1},\\
$ \hat{\Phi}(t)= \hat{f}(t) \oplus (x-t)\hat{f}^{\prime}(t) $.
From Proposition \ref{lma} and Theorem \ref{Theorem 7},
\begin{align*}
\hat{\Phi}^{\prime}(t)&= [\underline{f}^{\prime}(t),\underline{f}^{\prime}(t)] \oplus [(x-t)\underline{f}^{\prime \prime}(t)-\underline{f}^{\prime}(t) , (x-t)\overline{f}^{\prime \prime}(t) -\overline{f}^{\prime}(t) ]\\
&= [(x-t)\underline{f}^{\prime \prime}(t), (x-t)\overline{f}^{\prime \prime}(t)] \\
&=\frac{(x-t)^{2-1}}{(2-1)!}\hat{f}^{\prime \prime}(t)
\end{align*}
Let $n$ be odd and $ n=3$. From \eqref{eq 1},
$ \hat{\Phi}(t)= \hat{f}(t) \oplus (x-t)\hat{f}^{\prime}(t) \oplus \frac{(x-t)^2}{2!} \hat{f}^{\prime \prime}(t) $.\\
From Proposition \ref{lma} and Theorem \ref{Theorem 7},
\begin{align*}
\hat{\Phi}^{\prime}(t)&= [\underline{f}^{\prime}(t),\underline{f}^{\prime}(t)] \oplus [(x-t)\underline{f}^{\prime \prime}(t)-\underline{f}^{\prime}(t) , (x-t)\overline{f}^{\prime \prime}(t) -\overline{f}^{\prime}(t) ]\\ & \oplus \left[ \frac{(x-t)^2}{2!} \underline{f}^{(3)}(t)-(x-t)\underline{f}^{\prime \prime}(t), \frac{(x-t)^2}{2!} \overline{f}^{(3)}(t)-(x-t)\overline{f}^{\prime \prime}(t)\right]\\
&= \frac{(x-t)^{(3-1)}}{(3-1)!}\hat{f}^{(3)}(t)
\end{align*}
In general,
$ \hat{\Phi}^{\prime}(t)=\frac{(x-t)^{(n-1)}}{(n-1)!}\hat{f}^{(n)}(t)  $.\\
Similar result can  be derived if all $ \hat{\phi_{i}}(t)$  are equally $ \mu $ decreasing.\\ \\
\textbf{Case-2} Assume that $ \hat{\phi_{i}}(t) $s are differently $ \mu $ monotonic. In that case for at least two consecutive  $ \hat{\phi_{i}}(t) $s, one is $ \mu $ decreasing and another is $ \mu $ increasing.
Let  $ n=2 $ and $ \hat{\phi_{1}}(t) $ is $ \mu $ increasing and $ \hat{\phi_{2}}(t) $ is $ \mu $ decreasing. Then from Proposition \ref{lma} and Theorem \ref{Theorem 7},
\begin{align*}
\hat{\Phi^{\prime}}(t)&=[\underline{f}^{\prime}(t),\underline{f}^{\prime}(t)] \ominus_{gH} \left\lbrace \ominus_{gH} [(x-t)\overline{f}^{\prime \prime}(t) -\overline{f}^{\prime}(t), (x-t)\underline{f}^{\prime \prime}(t)-\underline{f}^{\prime}(t)] \right \rbrace \\
&= [\underline{f}^{\prime}(t),\underline{f}^{\prime}(t)] \ominus_{gH} [\underline{f}^{\prime}(t)-(x-t)\underline{f}^{\prime \prime}(t), \overline{f}^{\prime}(t)-(x-t)\overline{f}^{\prime \prime}(t)] \\
&=[(x-t)\overline{f}^{\prime \prime}(t),(x-t)\underline{f}^{\prime \prime}(t)]\\
&=(x-t)\hat{f}^{\prime \prime}(t)
\end{align*}
Let  $ n=3 $ and $ \hat{\phi_{1}}(t) $, $ \hat{\phi_{3}}(t) $ are $ \mu $ increasing and $ \hat{\phi_{2}}(t) $ is $ \mu $ decreasing. Then using Proposition \ref{lma} and Theorem \ref{Theorem 7},
\begin{align*}
\hat{\Phi^{\prime}}(t)&=[\underline{f}^{\prime}(t),\underline{f}^{\prime}(t)] \ominus_{gH} \left\lbrace \ominus_{gH} [(x-t)\overline{f}^{\prime \prime}(t) -\overline{f}^{\prime}(t), (x-t)\underline{f}^{\prime \prime}(t)-\underline{f}^{\prime}(t)] \right \rbrace \\ &
\oplus \left[ \frac{(x-t)^2}{2!} \underline{f}^{(3)}(t)-(x-t)\underline{f}^{\prime \prime}(t), \frac{(x-t)^2}{2!} \overline{f}^{(3)}(t)-(x-t)\overline{f}^{\prime \prime}(t)\right]\\
&= \frac{(x-t)^{(3-1)}}{(3-1)!}\hat{f}^{(3)}(t)
\end{align*}
In general, one can write,
$ \hat{\Phi}^{\prime}(t)=\frac{(x-t)^{(n-1)}}{(n-1)!}\hat{f}^{(n)}(t)  $.
From Theorem \ref{Mean-Value theorem for interval functions},  \begin{equation}\label{eq 4}
\hat{\Phi}(x)\ominus_{gH}\hat{\Phi}(a)\subset (x-a)\cup_{t \in \Delta}\hat{\Phi}^{\prime}(t)=\cup_{\theta \in [0,1]}\frac{(1-\theta)^{n-1}(x-a)^{n}}{(n-1)!}\hat{f}^{n}(a+\theta(x-a))
\end{equation}
That is,
\begin{equation*}
\begin{multlined}
 \hat{f}(x)\ominus_{gH} \left\lbrace \hat{f}(a) \oplus (x-a)\hat{f}^{'}(a)\oplus \frac{(x-a)^{2}}{2!}\hat{f}^{''}(a)\oplus \cdots    \oplus \frac{(x-a)^{n-1}}{(n-1)!}\hat{f}^{n-1}(a) \right \rbrace \\ \subset \cup_{\theta \in [0,1]}\frac{(x-a)^{n}(1-\theta)^{n-1}}{(n-1)!}\hat{f}^{n}(a+\theta(x-a))
 \end{multlined}
\end{equation*}
Hence the theorem.
\end{proof}
\begin{corollary}
\label{remainder}
Suppose there exists $k>0$ and $ M  > 0 $, such that for n sufficiently large,\\ $\Vert \hat{f}^{(n)}(x)\Vert < kM^{n}~\forall~x\in nbd(a) $. Then $(\frac{(x-a)^{n}(1-\theta)^{n-1}}{(n-1)!})\cup_{\theta \in [0,1]}\hat{f}^{n}(a+\theta(x-a))\rightarrow \hat{0} $ as $n\rightarrow\infty $.
\end{corollary}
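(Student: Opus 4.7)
The goal is to show that the interval-valued remainder
\[
R_n \;\triangleq\; \bigcup_{\theta\in[0,1]} \frac{(x-a)^{n}(1-\theta)^{n-1}}{(n-1)!}\,\hat{f}^{(n)}\bigl(a+\theta(x-a)\bigr)
\]
converges to $\hat{0}$ in the sense that $\|R_n\|\to 0$, where $\|\hat{a}\|=\max\{|\underline{a}|,|\overline{a}|\}$ coincides with the Hausdorff distance from $\hat{a}$ to $\hat{0}$. The plan is to bound each interval in the union uniformly in $\theta$ and then observe that the bounding set shrinks to $\{0\}$ at factorial rate.

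First I would fix $x$ close enough to $a$ so that the whole segment $\{a+\theta(x-a):\theta\in[0,1]\}$ lies in the neighbourhood of $a$ where the hypothesis $\|\hat{f}^{(n)}(\cdot)\|<kM^{n}$ holds for all sufficiently large $n$. Then, for each $\theta\in[0,1]$, the product of a real scalar $c$ with an interval $\hat{a}$ satisfies $\|c\,\hat{a}\|=|c|\cdot\|\hat{a}\|$, so
\[
\Bigl\|\frac{(x-a)^{n}(1-\theta)^{n-1}}{(n-1)!}\,\hat{f}^{(n)}\bigl(a+\theta(x-a)\bigr)\Bigr\|
\;\le\; \frac{|x-a|^{n}(1-\theta)^{n-1}}{(n-1)!}\cdot kM^{n}.
\]
Since $0\le(1-\theta)^{n-1}\le 1$ on $[0,1]$, the uniform bound
\[
\Bigl\|\frac{(x-a)^{n}(1-\theta)^{n-1}}{(n-1)!}\,\hat{f}^{(n)}\bigl(a+\theta(x-a)\bigr)\Bigr\|
\;\le\; \frac{k\,(M|x-a|)^{n}}{(n-1)!}
\]
holds independently of $\theta$.

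Consequently every interval in the union $R_n$ is contained in the symmetric interval
\[
\Bigl[-\tfrac{k(M|x-a|)^{n}}{(n-1)!},\;\tfrac{k(M|x-a|)^{n}}{(n-1)!}\Bigr],
\]
and hence so is $R_n$ itself. Therefore $\|R_n\|\le \frac{k(M|x-a|)^{n}}{(n-1)!}$, and this upper bound tends to $0$ as $n\to\infty$ because the factorial $(n-1)!$ dominates any geometric sequence $(M|x-a|)^{n}$. This gives $R_n\to\hat{0}$ in Hausdorff distance, which is exactly the claim.

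I do not anticipate a serious obstacle here: the argument is a routine "factorial beats exponential" estimate transported to the interval setting via the norm $\|\cdot\|$. The only small point to verify is that scalar multiplication and the supremum over $\theta$ behave well with respect to this norm, both of which follow immediately from the definitions recalled in Section 2. Hence the plan is simply bound-per-$\theta$, take the uniform sup, and invoke $(M|x-a|)^{n}/(n-1)!\to 0$.
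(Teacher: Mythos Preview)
Your argument is correct and follows essentially the same route as the paper: bound the norm of each term by $\frac{|x-a|^{n}(1-\theta)^{n-1}}{(n-1)!}kM^{n}$ and use that the factorial dominates the geometric factor. Your version is in fact slightly tidier, since you immediately invoke $(1-\theta)^{n-1}\le 1$ to get a bound uniform in $\theta$, whereas the paper carries the $(1-\theta)^{n-1}$ factor through and examines its limit separately before passing to the union; either way the conclusion is the same.
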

\begin{proof}
$ \Vert(\frac{(x-a)^{n}(1-\theta)^{n-1}}{(n-1)!})\hat{f}^{n}(\xi)\Vert\leqslant \frac{\mid x-a\mid^{n}(1-\theta)^{n-1}}{(n-1)!}kM^{n} $ holds for any $ \xi\in nbd(a) $. \\
 $\lim_{n\rightarrow\infty}\frac{M^{n-1}\mid x-a\mid^{n-1}}{(n-1)!}=0$ and $\lim_{n\rightarrow\infty}(1-\theta)^{n-1}=
\begin{cases}
0 & \text{$\theta \neq 0$}\\
1 & \text{$\theta = 0$}
\end{cases}$.\\
This implies
 $ (\frac{(x-a)^{n}(1-\theta)^{n-1}}{(n-1)!})\hat{f}^{n}(\xi)\rightarrow \hat{0} ~~as ~~n\rightarrow\infty $ for each $ \xi\in nbd(a)  $ and hence\\
 $(\frac{(x-a)^{n}(1-\theta)^{n-1}}{(n-1)!})\cup_{\theta \in [0,1]}\hat{f}^{n}(a+\theta(x-a))\rightarrow \hat{0} $ as $n\rightarrow\infty $.
\end{proof}
If the condition of Corollary \ref{remainder} is satisfied in Theorem \ref{Expansion theorem of single variable for interval valued function} for sufficiently large $ n $, then
\begin{align*}
\hat{f}(x)\ominus_{gH}\left\lbrace \hat{f}(a) \oplus (x-a)\hat{f}^{'}(a)\oplus \frac{(x-a)^{2}}{2!}\hat{f}^{''}(a)\oplus \cdots    \oplus \frac{(x-a)^{n-1}}{(n-1)!}\hat{f}^{n-1}(a) \right \rbrace  \rightarrow \hat{0}
\end{align*}
Hence
\begin{equation}\label{eq7}
 \hat{f}(x)\approx  \hat{f}(a) \oplus (x-a)\hat{f}^{'}(a)\oplus \frac{(x-a)^{2}}{2!}\hat{f}^{''}(a)\oplus \cdots    \oplus \frac{(x-a)^{n-1}}{(n-1)!}\hat{f}^{n-1}(a)
\end{equation}
\begin{ex}\label{EE}
Consider the expansion of  $ \hat{f}(x)= e^{[-1,2]x}= \begin{cases}
  [\exp(-x),\exp(2x)],~~ if~~ x \geqslant 0\\
   [\exp(2x),\exp(-x)], ~~if ~~ x < 0
\end{cases} $ about $ a  =1 $.\\
For $ x \geq 0 $, $ \mu_{\hat{f}(x)}=\exp(2x)-\exp(-x)$. $ \mu^{\prime}_{\hat{f}}(x)=2\exp(2x)+\exp(-x)>0~~\forall~~x$.\\
Therefore $ \hat{f}(x) $ is $ \mu $-increasing and also differentiable and $ \hat{f}^{\prime}(x)=[\underline{f}^{\prime}(x), \overline{f}^{\prime}(x)] =[-\exp(-x),2\exp(2x)]$.\\
$\mu_{\hat{f}^{\prime}}^{\prime}(x)=2^{2}\exp(2x)-\exp(-x)>0 ~~for~~x\geq 0$.\\
Therefore $ \hat{f}^{\prime}(x) $ is $ \mu $-increasing and also differentiable and $\hat{f}^{\prime\prime}(x)=[\exp(-x),4\exp(2x)] $.\\
Proceeding in a similar way $ \hat{f}^{(n)}(x)=[(-1)^{n}\exp(-x),2^{n}\exp(2x)] $  which is $\mu-$increasing $\forall~~n$.\\ For $ \xi\in [1,x],\Vert\hat{f}^{(n)}(\xi)\Vert\leq 2^{n}\exp(2x) $. Now $ \lim_{n\rightarrow\infty}\frac{(x-1)^{n}2^{n}}{(n-1)!}=0 $. Hence $ \Vert\hat{f}^{(n)}(\xi)\Vert \rightarrow 0  $ as $ n\rightarrow\infty $. Therefore all conditions of Theorem  \ref{Expansion theorem of single variable for interval valued function} and Corollary \ref{remainder} hold at $a=1$. Hence expansion of $ \hat{f}(x) $ in \eqref{eq7} about $ a=1 $ becomes
\begin{align*}
[\exp(-x),\exp(2x)]&\approx [\exp(-1),\exp(2)] \oplus  (x-1)[-\exp(-1),2\exp(2)] \oplus \frac{(x-1)^{2}}{2}[\exp(-1),4\exp(2)].
 \end{align*}
\end{ex}
\subsection{Expansion of interval valued function over $\mathbb{R}^n$}
\begin{theorem}[Expansion theorem of n variable for interval valued function]
\label{Expansion theorem of n variable for interval valued function}
Let $ \hat{f}: \Omega \subseteq \mathbb{R}^{n}\rightarrow I(\mathbb{R}) $ be gH differentiable up to order $ s $ on open convex subset $ \Omega $ of $\mathbb{R}^{n}  $ and $ \hat{f} $ and all the partial derivatives of $ \hat{f} $ up to order s are component-wise $ \mu $-monotonic over $ \Omega $ . Moreover for any $ \xi \in [0,1] $ if there exists an interval valued function $ \hat{\Psi}:[0,1]\rightarrow I(\mathbb{R}) $ as $ \hat{\Psi}(t)= \sum_{i=1}^{n} \hat{ \psi_{i}}(t)$, where $  \hat{ \psi_{i}}(t)= \frac{(\xi -t)^{i-1}}{(i-1)!}  \hat{g}^{(i-1)}(t)$,  $ \hat{g}(t)=\hat{f}(\gamma(t)) $ with $ \gamma(t)= a+tv,~ v=x-a $ for $ a,x \in \Omega $, $ t \in [0,1] $ such that $ \hat{ \psi_{i}} $ is $ \mu $ monotonic for each $i$. Then
\begin{equation}\label{eq8}
\begin{multlined}
\hat{f}(x)\ominus_{gH}\left\lbrace \hat{f}(a) \oplus \left(  \sum_{i=1}^{n}\frac{\partial \hat{f}(a)}{\partial x_{i}}(x_{i}-a_{i}) \right) \oplus \left( \frac{1}{2!}\sum_{i,j=1}^{n}\frac{\partial^{2}\hat{f}(a)}{\partial x_{i}\partial x_{j}}(x_{i}-a_{i})(x_{j}-a_{j}) \right) \oplus  \cdots \right.\\\left.\oplus \cdots  \left( \frac{1}{(s-1)!}\sum_{i_{1},i_{2},...,i_{s}=1}^{n} \frac{\partial^{s-1}\hat{f}(a)}{\partial x_{i1}...\partial xi_{s-1}}(x_{i1}-a_{i1})...(x_{is-1}-a_{is-1})\right)  \right\rbrace \\
\subset \cup_{c \in L.S\left\lbrace a,x \right\rbrace }\sum_{i_{1},i_{2},...,i_{s}=1}^{n}\frac{1}{(s-1)!}\frac{\partial^{s}\hat{f}(c)}{\partial x_{i1}...\partial x_{is}}(x_{i1}-a_{i1})...(x_{is}-a_{is}),
 \end{multlined}
\end{equation}
  where $L.S\left\lbrace a,x \right\rbrace  $ is the line segment joining $a$ and $x $.
\end{theorem}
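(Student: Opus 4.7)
My plan is to reduce this $n$-variable expansion to the single-variable case (Theorem \ref{Expansion theorem of single variable for interval valued function}) via the composition $\hat{g}(t)\triangleq\hat{f}(\gamma(t))$ with $\gamma(t)=a+t(x-a)$, $t\in[0,1]$. Since $\Omega$ is open and convex, $\gamma([0,1])\subseteq\Omega$, so $\hat{g}:[0,1]\to I(\mathbb{R})$ is well-defined. The stated auxiliary function $\hat{\Psi}(t)=\sum_i\hat{\psi}_i(t)$ with $\hat{\psi}_i(t)=\frac{(\xi-t)^{i-1}}{(i-1)!}\hat{g}^{(i-1)}(t)$ is exactly the function $\hat{\Phi}$ of Theorem \ref{Expansion theorem of single variable for interval valued function} written for $\hat{g}$, so the $\mu$-monotonicity hypothesis on each $\hat{\psi}_i$ is precisely what is needed to invoke that theorem on $\hat{g}$ over $\Delta=[0,1]$.

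The first substantive step is to compute $\hat{g}^{(k)}(t)$ in closed form by iterated application of the chain rule (Corollary \ref{comp}). A straightforward induction on $k$ would give
\begin{equation*}
\hat{g}^{(k)}(t)=\sum_{i_{1},\ldots,i_{k}=1}^{n}(x_{i_{1}}-a_{i_{1}})\cdots(x_{i_{k}}-a_{i_{k}})\,\frac{\partial^{k}\hat{f}(\gamma(t))}{\partial x_{i_{1}}\cdots\partial x_{i_{k}}},
\end{equation*}
for $1\le k\le s$. The base case $k=1$ is Corollary \ref{comp} applied to $\gamma$, whose components are linear (so $u_i'(t)=x_i-a_i$). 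For the inductive step I would differentiate the inner sum term by term; each term is of the form (real coefficient)$\cdot\hat{h}(\gamma(t))$ where $\hat{h}$ is a partial derivative of $\hat{f}$, and since all such $\hat{h}$ are assumed component-wise $\mu$-monotonic, Corollary \ref{comp} applies. Combining the resulting terms uses Theorem \ref{Theorem 7} to justify distributing the gH-derivative across $\oplus$ when summands are equally or differently $\mu$-monotonic (hypothesised through the $\mu$-monotonicity of $\hat{\psi}_i$).

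Having $\hat{g}^{(k)}$, I now apply Theorem \ref{Expansion theorem of single variable for interval valued function} to $\hat{g}$ on $\Delta=[0,1]$ at $a=0$ with the point $1$, using $\xi=1$ in the auxiliary function $\hat{\Psi}$. The theorem yields
\begin{equation*}
\hat{g}(1)\ominus_{gH}\left\{\hat{g}(0)\oplus\hat{g}'(0)\oplus\tfrac{1}{2!}\hat{g}''(0)\oplus\cdots\oplus\tfrac{1}{(s-1)!}\hat{g}^{(s-1)}(0)\right\}\subset\bigcup_{\theta\in[0,1]}\tfrac{(1-\theta)^{s-1}}{(s-1)!}\hat{g}^{(s)}(\theta).
\end{equation*}
Since $\hat{g}(0)=\hat{f}(a)$, $\hat{g}(1)=\hat{f}(x)$, and $\gamma(\theta)=a+\theta(x-a)$ sweeps the line segment $L.S\{a,x\}$ as $\theta$ ranges over $[0,1]$, substituting the closed-form expression for each $\hat{g}^{(k)}(0)$ and $\hat{g}^{(s)}(\theta)$ produces exactly the right-hand side and left-hand side of \eqref{eq8}.

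The main obstacle I expect is the inductive derivative computation: distributing $(\cdot)'$ across an interval sum of terms of the form $(\text{real scalar})\,\hat{h}(\gamma(t))$ is only clean when the sign pattern of $\mu$-monotonicity of each summand is controlled, because Theorem \ref{Theorem 7} branches according to whether summands are equally or differently $\mu$-monotonic. The hypothesis that every partial derivative of $\hat{f}$ up to order $s$ is component-wise $\mu$-monotonic over $\Omega$, together with the assumed $\mu$-monotonicity of each $\hat{\psi}_i$, is precisely what rules out pathological behaviour and lets the induction close; careful bookkeeping of which case of Theorem \ref{Theorem 7} applies at each step is the only delicate point, and it parallels the Case 1/Case 2 split used in the proof of Theorem \ref{Expansion theorem of single variable for interval valued function}.
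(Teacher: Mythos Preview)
Your proposal is correct and follows essentially the same route as the paper: reduce to the single-variable case by setting $\hat{g}(t)=\hat{f}(\gamma(t))$ with $\gamma(t)=a+t(x-a)$, compute $\hat{g}^{(k)}(t)$ via Corollary~\ref{comp} (the paper states the first two derivatives and then appeals to induction, just as you outline), apply Theorem~\ref{Expansion theorem of single variable for interval valued function} to $\hat{g}$ on $[0,1]$ about $0$, and substitute $\hat{g}(0)=\hat{f}(a)$, $\hat{g}(1)=\hat{f}(x)$, $\gamma(\theta)\in L.S\{a,x\}$. Your extra care with the Case~1/Case~2 bookkeeping from Theorem~\ref{Theorem 7} during the inductive differentiation is more explicit than the paper's treatment, but the strategy is identical.
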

\begin{proof}
 $ \hat{f}: \Omega \rightarrow I(\mathbb{R}) $ and $ \gamma: [0,1] \rightarrow \Omega $. Since $ \Omega $ is a convex subset of $ \mathbb{R}^{n} $, for $ a, b \in \Omega $, $ a+t(b-a) $ with $ t \in [0,1] $  must belongs to $ \Omega $. $\hat{g}:[0,1] \rightarrow I(\mathbb{R})$ is defined by $ \hat{g}(t)= \hat{f}( \gamma_1(t), \gamma_2(t), \cdots \gamma_n(t)) $, where $ \gamma_i(t)= a_i+t (b_i-a_i), ~\forall~  i=1,2, \cdots, n $, $ t \in [0,1] $. By Corollary \ref{comp},  $ \hat{g} $ is differentiable. Since $ \hat{f} $  gH differentiable up to order $s$ so $ \hat{g} $ is also differentiable up to order $s$. Hence $\hat{ \Psi}(t) $ exists. Therefore \\
$ \hat{g}^{\prime}(t)= \sum_{i=1}^{n} \gamma_i^{\prime}(t) \frac{\partial \hat{f} (\gamma(t))}{\partial x_{i}}= \sum_{i=1}^{n}(b_i-a_i) \frac{\partial \hat{f}(\gamma(t))}{\partial x_{i}}=  \nabla \hat{f}(\gamma(t))^Tv  $. Hence
$\hat{g}^{\prime\prime}(t)=
v^{T}\nabla^{2}(\hat{f}(\gamma(t))v$.\\
By induction, $ \hat{g}^{(s)}(t)=\sum_{i_{1},i_{2},\cdots,i_{s}=1}^{n}\frac{\partial^{s} \hat{f}(\gamma (t))}{\partial x_{i1}\partial x_{i2}\cdots \partial x_{is}} (x_{i1}-a_{i1})\cdots(x_{is}-a_{is})       $\\
From the assumptions of the theorem, $ \hat{\Psi} $ satisfies all the condition of Theorem \ref{Expansion theorem of single variable for interval valued function}. Using the expression of Theorem \ref{Expansion theorem of single variable for interval valued function} about `$ 0 $',

\begin{equation}\label{eq9}
 \hat{g}(t)\ominus_{gH}\left\lbrace \hat{g}(0)\oplus t \hat{g}^{\prime}(0) \oplus \frac{t^{2}}{2!}\hat{g}^{\prime \prime}(0) \oplus  \cdots \oplus \frac{t^{(s-1)}}{(s-1)!}\hat{g}^{(s-1)}(0)\right\rbrace \subset \cup_{\theta \in [0,1]}\frac{t^{s}}{(s-1)!}\hat{g}^{(s)}(\theta),
\end{equation}
In particular for t=1,
\begin{equation}\label{eq10}
\hat{g}(1)\ominus_{gH}\left\lbrace \hat{g}(0)\oplus  \hat{g}^{\prime}(0) \oplus  \frac{1}{2!}\hat{g}^{\prime \prime}(0)\oplus \cdots \oplus \oplus \frac{1}{(s-1)!}\hat{g}^{(s-1)}(0)\right\rbrace  \subset \cup_{\theta \in [0,1]}\frac{1}{(s-1)!}\hat{g}^{(s)}(\theta).
\end{equation}
$\hat{g}(1)=\hat{f}(x),\hat{g}(0)=\hat{f}(a)$, $ \hat{g}^{\prime}(0)=\sum_{i=1}^{n}\frac{\partial\hat{f}(a)}{\partial x_{i}}(x_{i}-a_{i})$,  $\hat{g}^{\prime\prime}(0)=\sum_{i,j=1}^{n}\frac{\partial^{2}\hat{f}(a)}{\partial x_{i}\partial x_{j}}(x_{i}-a_{i})(x_{j}-a_{j})  $, etc..\\
\eqref{eq8} follows after substituting these values in \eqref{eq10}.
\end{proof}
\begin{corollary}\label{cor}
Suppose there exist $ k>0$ and $ M  > 0 $, such that for sufficiently large n, \\ $\Vert\frac{\partial^{s}\hat{f}(c)}{\partial x_{i1}...\partial x_{is}} \Vert < kM^{s}~\forall~ c\in  L.S \left\lbrace  a , x\right\rbrace $. Then $$ \cup\sum_{i_{1},i_{2},...,i_{s}=1}^{n}(\frac{1}{(s-1)!})\frac{\partial^{s}\hat{f}(c)}{\partial x_{i1}...\partial x_{is}}(x_{i1}-a_{i1})...(x_{is}-a_{is})\rightarrow \hat{0}~~ as~~ s\rightarrow\infty $$
\end{corollary}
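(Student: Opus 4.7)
My plan is to mirror the argument of Corollary \ref{remainder} from the single-variable setting: bound each summand in norm by means of the uniform hypothesis on the $s$-th order partial derivatives, sum these bounds over all multi-indices $(i_1,\ldots,i_s)\in \Lambda_n^s$, and then invoke the factorial-dominates-exponential asymptotic to drive the union to $\hat{0}$.

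First, I would fix $s$ large and $c \in L.S\{a,x\}$. Using the scalar--interval product rule together with the triangle inequality for the norm $\Vert\cdot\Vert$ on $I(\mathbb{R})$ (which is compatible with $\oplus$ and satisfies $\Vert \alpha \odot \hat{a}\Vert=|\alpha|\,\Vert\hat{a}\Vert$ for $\alpha\in\mathbb{R}$), together with the hypothesis $\Vert \partial^{s}\hat{f}(c)/\partial x_{i_1}\cdots\partial x_{i_s}\Vert < kM^{s}$, each summand admits the estimate
$$\left\Vert \frac{1}{(s-1)!}\,\frac{\partial^{s}\hat{f}(c)}{\partial x_{i_1}\cdots \partial x_{i_s}}\,(x_{i_1}-a_{i_1})\cdots(x_{i_s}-a_{i_s})\right\Vert \;\le\; \frac{kM^{s}}{(s-1)!}\,|x_{i_1}-a_{i_1}|\cdots|x_{i_s}-a_{i_s}|.$$
Summing over all $(i_1,\ldots,i_s)\in\Lambda_n^{\,s}$ and using the elementary identity
$$\sum_{i_1,\ldots,i_s=1}^{n}|x_{i_1}-a_{i_1}|\cdots|x_{i_s}-a_{i_s}|=\Bigl(\sum_{i=1}^{n}|x_i-a_i|\Bigr)^{\!s}=\Vert x-a\Vert_{1}^{\,s},$$
the whole $s$-th order sum has norm at most $\dfrac{k\,(M\,\Vert x-a\Vert_{1})^{s}}{(s-1)!}$, uniformly in $c$.

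Since this bound is independent of $c$, it persists under the union over $c\in L.S\{a,x\}$. The classical comparison $\dfrac{(M\,\Vert x-a\Vert_{1})^{s}}{(s-1)!}\to 0$ as $s\to\infty$ (used already in Corollary \ref{remainder}) then forces the union to collapse to $\hat{0}$, which is exactly the claim. The only mildly delicate point is verifying that the norm interacts correctly with $\oplus$ and with scalar multiplication by possibly negative coordinates $x_i-a_i$; however, $\Vert(-1)\odot\hat{a}\Vert=\Vert\hat{a}\Vert$ together with the triangle inequality for the Hausdorff-induced norm (implicitly used throughout the paper) dispatch this at once, so no genuine obstacle arises; this corollary is a routine multivariate lift of Corollary \ref{remainder}.
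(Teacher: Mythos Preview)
Your argument is correct and follows essentially the same route as the paper: both reduce the claim to the factorial-dominates-exponential estimate already used in Corollary~\ref{remainder}. The paper's proof is terser---it simply records the norm identity $\Vert \partial^{s}\hat{f}(c)/\partial x_{i_1}\cdots\partial x_{i_s}\Vert=\max\{\,|\partial^{s}\underline{f}(c)/\partial x_{i_1}\cdots\partial x_{i_s}|,\,|\partial^{s}\overline{f}(c)/\partial x_{i_1}\cdots\partial x_{i_s}|\,\}$ and then invokes Corollary~\ref{remainder} directly (implicitly applied to $\hat{g}$)---whereas you make the bridge explicit by summing the termwise bounds and using $\sum_{i_1,\ldots,i_s}|x_{i_1}-a_{i_1}|\cdots|x_{i_s}-a_{i_s}|=\Vert x-a\Vert_1^{\,s}$ to obtain the uniform estimate $k(M\Vert x-a\Vert_1)^{s}/(s-1)!$; this is exactly the computation the paper suppresses.
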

\begin{proof}For any c $ \in L.S \left \lbrace a,x \right\rbrace$,
$\|\frac{\partial^{s}\hat{f}(c)}{\partial x_{i1}...\partial x_{is}}\|=\max\left\lbrace \mid\frac{\partial^{s}\underline{f}(c)}{\partial x_{i1}...\partial x_{is}}\mid,\mid \frac{\partial^{s}\overline{f}(c)}{\partial x_{i1}...\partial x_{is}}\mid\right\rbrace $.\\
From Corollary \ref{remainder},
$$\left( \frac{1}{(s-1)!}\right) \sum_{i_{1},i_{2},...,i_{s}=1}^{n}\frac{\partial^{s}\hat{f}(c)}{\partial x_{i1}...\partial x_{is}}(x_{i1}-a_{i1})...(x_{is}-a_{is})         \rightarrow \hat{0} ~~as~~s\rightarrow\infty $$
Therefore $ \cup(\frac{1}{(s-1)!})\sum_{i_{1},i_{2},...,i_{s}=1}^{n}\frac{\partial^{s}\hat{f}(c)}{\partial x_{i1}...\partial x_{is}}(x_{i1}-a_{i1})...(x_{is}-a_{is})\rightarrow \hat{0}~~ as~~ s\rightarrow\infty $.
\end{proof}
Following result holds as a consequence of \eqref{eq8} and Corollary \ref{cor}.
\begin{equation}\label{eq11}
\begin{multlined}
 \hat{f}(x)\approx  \hat{f}(a) \oplus \left(  \sum_{i=1}^{n}\frac{\partial \hat{f}(a)}{\partial x_{i}}(x_{i}-a_{i})\right)  \oplus \left( \frac{1}{2!}\sum_{i,j=1}^{n}\frac{\partial^{2}\hat{f}(a)}{\partial x_{i}\partial x_{j}}(x_{i}-a_{i})(x_{j}-a_{j})\right)  \oplus  \cdots \\ \oplus \cdots \left(  \frac{1}{(s-1)!}\sum_{i_{1},i_{2},...,i_{s}=1}^{n} \frac{\partial^{s-1}\hat{f}(a)}{\partial x_{i1}...\partial xi_{s-1}}(x_{i1}-a_{i1})...(x_{is-1}-a_{is-1}) \right)
 \end{multlined}
\end{equation}
%
\begin{ex}\label{EF}
Consider $ \hat{f}(x_{1},x_{2})=[-2,3]x_{1}e^{[-1,2]x_{2}} $.\\
$ \hat{f}(x_{1},x_{2})= [\underline{f}( x_{1},x_{2} , \overline{f}( x_{1},x_{2} ]=\begin{cases}
\left[ -2x_{1}e^{2x_{2}},3x_{1}e^{2x_{2}}\right] ,& \text{ if } x_{1}\geq 0 , x_{2} \geq 0\\
\left[ 3x_{1}e^{2x_{2}},-2x_{1}e^{2x_{2}}\right], & \text{ if } x_{1}\leq 0 , x_{2}\geq 0\\
\left[ 3x_{1}e^{-x_{2}},-2x_{1}e^{-x_{2}}\right] ,& \text{ if } x_{1}\leq 0 , x_{2} \leq 0\\
\left[ -2x_{1}e^{-x_{2}},3x_{1}e^{-x_{2}}\right] & \text{ if } x_{1}\geq 0 , x_{2} \leq 0
\end{cases}     $.\\
Consider the quadratic expansion of $ \hat{f}(x_{1}, x_{2})= \left[ -2x_{1}e^{2x_{2}},3x_{1}e^{2x_{2}}\right]$ , $ x_{1}\geq 0 , x_{2} \geq 0 $ about $ a= (2,2) $.\\
$ \mu_{\hat{f}}(x_{1},x_{2})=3x_{1}e^{2x_{2}}+ 2x_{1}e^{2x_{2}}$. From the derivative of $ \mu_{\hat{f}}(x_{1},x_{2})$
it can be easily verified that
\begin{enumerate}[(i)]
\item  $ \hat{f} $  is $ \mu $-increasing with respect to $ x_1 $ and $ x_2 $ both,
\item  $  \frac{\partial \hat{f}}{\partial x_{1}} $ is  $\mu $ -increasing with respect to $ x_{2} $ and , $  \frac{\partial \hat{f}}{\partial x_{2}} $ is $\mu $-increasing with respect to $ x_1 $ and $ x_{2} $ both.
\end{enumerate}
Using \eqref{eq11}, quadratic expansion of $ \hat{f}(x_{1},x_{2}) $ about $ (2,2) $ becomes
\begin{align*}
 \hat{f}(x_{1},x_{2})\approx [-4e^{4},6e^{4}]\oplus \left\lbrace (x_{1}-2)[-2e^{4} ,3e^{4}] \oplus (x_{2}-2)[-8e^{4},12e^{4}]\right\rbrace \\
 \oplus \left\lbrace  (x_{1}-2)(x_{2}-2)[-4e^{4},6e^{4}] \oplus \frac{(x_{2}-2)^{2}}{2}[-16e^{4},24e^{4}]\right\rbrace
\end{align*}

\end{ex}

\section{Conclusion and future scope}
In this article calculus of interval valued function is discussed using $ \mu $-monotonic property and composite mapping of interval valued function and real valued function is studied. Expansions of interval valued function over $ \mathbb{R} $ and $ \mathbb{R}^n $ are developed using composite mapping and gH diffentiability.  This expansion can provide a powerful tool for developing algorithms for solution of system of equation, least mean square problems with interval parameters, which may be considered as the future scope of the present contribution.


\end{document}